\newcommand\R{{\mathbb R}}
\newcommand{\E}{\mathbb{E}}
\newtheorem{theorem}{Theorem}[section]
\newtheorem{lemma}[theorem]{Lemma}
\newtheorem{proposition}[theorem]{Proposition}
\theoremstyle{definition}
\theoremstyle{remark}
\newtheorem{remark}[theorem]{Remark}
\newtheorem{question}[theorem]{Question}
\numberwithin{theorem}{section}
\numberwithin{equation}{section}
\newcommand{\ep}{\epsilon}
\newcommand{\del}{\delta}
\newcommand{\reals}{\mathbb{R}}
\newcommand{\Heis}{\mathbb{H}}
\newcommand{\nats}{\mathbb{N}}
\newcommand{\nbhd}{\mathcal{N}}
\newcommand{\inv}{^{-1}}
\newcommand{\ovl}{\overline}
\newcommand{\til}{\widetilde}
\newcommand{\cH}{\mathcal{H}}
\newcommand{\subeq}{\subseteq}
\newcommand{\bslash}{\backslash}
\newcommand{\Lloc}{\rm{L}_{\loc}}
\newcommand{\V}{\mathbb{V}}
\def\loc{\operatorname{loc}}
\def\diam{\operatorname{diam}}
\def\card{\operatorname{card}}
\def\Lip{\operatorname{Lip}}
\def\W{\operatorname{W}}
\def\Sl{\operatorname{Sl}}
\def\XXint#1#2#3{{\setbox0=\hbox{$#1{#2#3}{\int}$}
\vcenter{\hbox{$#2#3$}}\kern-.5\wd0}}
\newcommand{\restrict}{\begin{picture}(12,12)
                        \put(2,0){\line(1,0){8}}
                        \put(2,0){\line(0,1){8}}
                       \end{picture}}
\author[Z.M. Balogh, J.T. Tyson, and K. Wildrick]{Zolt\'an M. Balogh, Jeremy T. Tyson, Kevin Wildrick}
\address{Z. M. Balogh: Mathematisches Institut, Universit\"at Bern, Sidlerstrasse 5, 3012 Bern, Switzerland ({\tt balogh.zoltan@math.unibe.ch})}
\address{J. T. Tyson: Department of Mathematics, University of Illinois at Urbana-Champaign, 1409 W Green Street, Urbana, IL 61801, USA ({\tt tyson@math.uiuc.edu})}
\address{K. Wildrick: Mathematisches Institut, Universit\"at Bern, Sidlerstrasse 5, 3012 Bern, Switzerland ({\tt kevin.wildrick@math.unibe.ch})}
\keywords{Sobolev mapping, Heisenberg group, foliation, dimension distortion \\ 2010 \emph{Mathematics subject classification.} Primary: 46E35, 28A78; Secondary: 46E40, 53C17, 30L99}
\thanks{The first and third authors were supported by the Swiss National Science Foundation, European Research Council Project CG-DICE, and the European Science Council Project HCAA. The second author was supported by NSF grants DMS-0901620 and DMS-1201875.}
\begin{document}
\title[Frequency of Sobolev dimension distortion in Heisenberg groups]{Frequency of Sobolev dimension distortion of horizontal subgroups of Heisenberg groups}
\begin{abstract}
We study the behavior of Sobolev mappings defined on the Heisenberg groups with respect to a foliation by left cosets of a horizontal homogeneous subgroup.  We quantitatively estimate, in terms of Euclidean Hausdorff dimension, the size of the set of cosets that are mapped onto sets of high dimension.  The proof of our main result combines ideas of Gehring and Mostow about the absolute continuity of quasiconformal mappings with Mattila's projection and slicing machinery.
\end{abstract}
\date{}

\maketitle

\begin{center}
\it In memory of Frederick W. Gehring (1925--2012)
\end{center}

\

\section{Introduction}

Every Sobolev mapping $f$ defined on an open subset of Euclidean space admits a representative which is absolutely continuous along almost every line. In particular, the image of almost every line segment has dimension no greater than~$1$. While $f$ may increase the Hausdorff dimension of a line segment in the remaining measure zero set of lines, if $f$ is super-critical (i.e., $f \in \W^{1,p}_{\loc}$, $p>n$), then the amount of this increase is controlled. The following folklore theorem states that there is a universal bound on the amount by which such a mapping $f$ can increase the dimension of a subset. Here and below, we denote by $\cH^\alpha_X$ the $\alpha$-dimensional Hausdorff measure in a metric space $X$, and by $\dim_X(A)$ the Hausdorff dimension of a subset $A$ of the space $X$; when it will not cause confusion, we may suppress the reference to the ambient space $X$.

\begin{theorem}\label{Kaufman} Let $f \colon \reals^n \to Y$ be a continuous mapping to a metric space $Y$ that lies in the Sobolev space $\W^{1,p}_{\loc}(\reals^n;Y)$, $p > n$. For any subset $E \subeq \reals^n$ that is of $\sigma$-finite $s$-dimensional Hausdorff measure for some $0\leq s<n$, it holds that $\cH^{\alpha}(f(E))=0$, where
\begin{equation}\label{Kaufman-alpha}
\alpha = \frac{ps}{p-(n-s)}.
\end{equation}
Moreover, if $\cH_{\reals^n}^n(E)=0$, then $\cH^n(f(E))=0$ as well.
\end{theorem}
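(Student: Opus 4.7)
The plan is to combine Morrey's inequality with a Hölder interpolation applied to a carefully chosen cover of $E$. Since $p > n$, a continuous representative of $f$ satisfies the oscillation bound
\begin{equation*}
\diam f(B(x,r)) \leq C(n,p)\, r^{(p-n)/p} \left( \int_{B(x,2r)} g^p \, dy \right)^{1/p},
\end{equation*}
where $g \in \eL^p_{\loc}(\reals^n)$ is an upper gradient for $f$ (for metric-space-valued mappings this inequality is standard, following from the Poincaré inequality on $\reals^n$). The goal is to control the $\alpha$-Hausdorff content of $f(E)$ by interpolating between the $s$-content of $E$ and the $\eL^p$-energy of $g$ on a thin neighborhood of $E$; the exponent $\alpha$ in \eqref{Kaufman-alpha} arises as the unique value for which this interpolation is balanced.

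By $\sigma$-subadditivity of $\cH^{\alpha}$ we may assume that $E$ is compact and $\cH^{s}(E) < \infty$. Since $s < n$, the Lebesgue measure of $E$ vanishes, so for any $\ep > 0$ we may choose an open neighborhood $U \supset E$ with $\int_{U} g^{p}\, dx < \ep$. Fix $\delta > 0$ and, via a Vitali-type argument, cover $E$ by balls $B_i = B(x_i, r_i)$ with $r_i < \delta$, $2B_i \subset U$, $\sum_i r_i^{s} \leq \cH^{s}(E) + 1$, and with the doubled balls $\{2B_i\}$ having uniformly bounded overlap. Setting $L_i := \diam f(B_i)$ and $a_i := \|g\|_{\eL^p(2B_i)}$, Morrey's inequality gives $L_i \leq C\, r_i^{(p-n)/p}\, a_i$, and Hölder's inequality with conjugate exponents $p/\alpha$ and $p/(p-\alpha)$ yields
\begin{equation*}
\sum_i L_i^{\alpha} \leq C^{\alpha} \left( \sum_i a_i^{p} \right)^{\alpha/p} \left( \sum_i r_i^{\alpha(p-n)/(p-\alpha)} \right)^{(p-\alpha)/p}.
\end{equation*}
The identity $\alpha(p-n)/(p-\alpha) = s$ is equivalent to \eqref{Kaufman-alpha}, and $0 < \alpha < p$ under the hypothesis $0 \leq s < n < p$. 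Bounded overlap gives $\sum_i a_i^{p} \leq C(n) \int_{U} g^{p} < C(n)\, \ep$, while Morrey's inequality again forces each $L_i \leq C\, \delta^{(p-n)/p}\, \|g\|_{\eL^{p}(U)}$, so the cover generates an $\alpha$-Hausdorff premeasure at an arbitrarily small scale. Letting $\delta \to 0$ and then $\ep \to 0$ produces $\cH^{\alpha}(f(E)) = 0$.

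For the borderline statement the same scheme applies with $s = n$, forcing $\alpha = n$ via \eqref{Kaufman-alpha}; here the hypothesis $\cH^{n}(E) = 0$ itself delivers covers with $\sum_i r_i^{n}$ arbitrarily small, so no auxiliary neighborhood $U$ is needed and the Hölder estimate directly gives $\sum_i L_i^{n} \leq C\, \|g\|_{\eL^{p}(K)}^{n}\, (\sum_i r_i^{n})^{(p-n)/p} \to 0$. The principal delicacy is the construction of covers that are simultaneously $s$-efficient, have bounded overlap on the doubled balls (so that $\sum_i a_i^{p}$ aggregates to a single $\eL^{p}$-integral), and live inside a set of arbitrarily small $\eL^{p}$-energy; the last ingredient hinges on $|E| = 0$, which is automatic when $s < n$ and which is the substance of the $\cH^{n}$-null case (an instance of the Luzin $(N)$-property for super-critical Sobolev maps).
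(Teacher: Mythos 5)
Your overall scheme---the Morrey oscillation bound, H\"older with exponents $p/\alpha$ and $p/(p-\alpha)$, the bookkeeping $\alpha(p-n)/(p-\alpha)=s$, and the use of $|E|=0$ (automatic for $s<n$) to place $E$ in an open set $U$ of small $p$-energy---is the standard route to Theorem \ref{Kaufman} (the paper itself does not prove it but refers to Kaufman; the same mechanism appears at a single scale in the proof of Proposition \ref{minkowski}). The gap is the covering step, which you assert ``via a Vitali-type argument'': you need balls $B_i=B(x_i,r_i)$, $r_i<\delta$, with $\sum_i r_i^s\le \cH^s(E)+1$ \emph{and} uniformly bounded overlap of the doubled balls $\{2B_i\}$, so that $\sum_i a_i^p\lesssim\int_U g^p$. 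No Vitali-type selection delivers this. The $5r$-covering lemma gives disjointness of the selected balls, but dilates of disjoint balls of widely varying radii can have unbounded overlap: place in each annulus $\{4\cdot 2^{-k}\le |x|\le 6\cdot 2^{-k}\}$ a ball of radius $2^{-k}$; the balls are pairwise disjoint while all their doubles contain the origin. The Besicovitch covering theorem gives bounded overlap of the selected balls themselves, but not of their dilates, which is exactly what the enlarged-ball Morrey inequality forces you to integrate over. Without the aggregation $\sum_i a_i^p\lesssim\int_U g^p$, estimating each $a_i$ by $\|g\|_{L^p(U)}$ only yields the non-sharp exponent $ps/(p-n)$. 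This is precisely why the single-scale situation in Proposition \ref{minkowski} (balls of one fixed radius centered along a coset, chosen from a separated net) is easy, while a general set $E$ forces you to work across all scales; your closing sentence names this as ``the principal delicacy'' but then attributes it to $|E|=0$, which is a different (and unproblematic) point.

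The gap is fixable, but by a different covering mechanism than the one you invoke. One repair: pass from an efficient cover of $E$ to dyadic cubes of comparable side, take the maximal cubes (which are pairwise disjoint and still $s$-efficient up to a dimensional constant), and use the Morrey/Sobolev embedding on the cube \emph{itself}, with no enlargement---valid on Euclidean cubes or balls because they are John domains supporting the relevant Poincar\'e inequality (or after post-composing with an isometric embedding of $Y$ into $\ell^\infty$ and using the Banach-valued embedding); then disjointness alone aggregates the energies. Alternatively, recenter: assign to each $x\in E$ a ball $B(x,r_{i(x)})$ where $B_{i(x)}$ is an efficient-cover ball containing $x$, apply Besicovitch to get a subcover with overlap $C(n)$, and note that all selected balls arising from a fixed index $i$ contain the center of $B_i$, so at most $C(n)$ of them occur per index and $\sum_j r_{i(x_j)}^s\le C(n)\sum_i r_i^s$; this again must be combined with the no-enlargement form of the oscillation estimate. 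The same issue occurs verbatim in your borderline case $s=n$, where the bound $\bigl(\sum_i a_i^p\bigr)^{n/p}\le C\|g\|_{L^p(K)}^n$ also presupposes bounded overlap. As written, the argument is not complete at its decisive step.
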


A proof may be found, for example, in \cite[Theorem 1]{Kaufman}. This bound is strictly better than that provided by the $(1-\frac{n}{p})$-H\"older continuity of such mappings arising from the Sobolev Embedding Theorem, and, as shown by Kaufman in \cite[Theorem 2]{Kaufman}, it is sharp. Estimates such as those that appear in Theorem \ref{Kaufman} have been known for many years in the quasiconformal category, see e.g.\ Gehring--V\"ais\"al\"a \cite{GehringV73} and Astala \cite{Astala94}.

It is also possible to bound the ``quantity" of sets of a given dimension that can be simultaneously distorted by a Sobolev mapping. One way to formulate this statement precisely is in terms of foliations. Balogh, Monti, and Tyson \cite{BMT} considered the foliation of $\reals^n$ by translates of a linear subspace $V$ and estimated the size of the set of translates, as measured by Hausdorff dimension in $V^\perp$, that are mapped by a super-critical Sobolev mapping onto sets of pre-specified Hausdorff dimension between $m = \dim V$ and the universal bound given by \eqref{Kaufman-alpha}.

\begin{theorem}[Balogh--Monti--Tyson]\label{BMT theorem} Let $f$ be a continuous mapping in the Sobolev space $\W^{1,p}_{\loc}(\reals^n;Y)$, $p > n$. Given a vector subspace $V$ of $\reals^n$ of dimension $1 \leq m \leq n$, and given
$$ \alpha \in \left\lbrack m, \frac{pm}{p-(n-m)}\right\rbrack,$$
it holds that
\begin{equation}\label{BMT setup} \dim_{\reals^n} \{a \in V^\perp: \dim{f(a+V)} \geq \alpha\} \leq \beta(p,m,\alpha),\end{equation}
where
\begin{equation}\label{BMT formula}  \beta(p,m,\alpha)= (n-m)-p\left(1-\frac{m}{\alpha}\right).\end{equation}
\end{theorem}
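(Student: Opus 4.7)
The plan is to argue by contradiction, combining Frostman measures on both the exceptional set of cosets and on each high-dimensional slice image, lifting $f$ to record the transverse coordinate, and pitting the resulting product-like measure against the Morrey--Sobolev inequality through a weighted double-counting at small scales. At the endpoint $\alpha = m$ one has $\beta(p, m, m) = n - m$ and the bound is trivial, so assume $\alpha > m$. Suppose for contradiction that $\dim_{\reals^n} E_\alpha > \beta := \beta(p, m, \alpha)$, where $E_\alpha = \{a \in V^\perp : \dim f(a+V) \geq \alpha\}$. Since $\alpha \mapsto \beta(p, m, \alpha)$ is continuous and strictly decreasing on $[m, pm/(p-(n-m))]$ (as $\partial_\alpha \beta = -pm/\alpha^2 < 0$), fix $\beta' \in (\beta, \dim E_\alpha)$ and then choose $\alpha' \in (m, \alpha)$ close enough to $\alpha$ that $\beta' > \beta(p, m, \alpha')$.

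A preliminary reduction -- countable exhaustion of $E_\alpha$ by a scale $L$ (the radius of a ball in which the slice image is measured) and by a lower threshold on the Hausdorff $\alpha'$-content of bounded slice images, followed by Frostman's lemma -- produces a compact set $E' \subseteq E_\alpha \cap B_{V^\perp}(0, L)$ with $0 < \mathcal{H}^{\beta'}(E') < \infty$, a Frostman measure $\mu$ on $E'$ satisfying $\mu(B_{V^\perp}(a, r)) \leq r^{\beta'}$, and, for each $a \in E'$, a Frostman measure $\nu_a$ supported on $f((a+V) \cap B_{\reals^n}(0, L))$ with $\nu_a(B_Y(y, r)) \leq r^{\alpha'}$ and uniform total mass $\nu_a(Y) \geq c > 0$. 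Form the lifted map $F \colon \reals^n \to V^\perp \times Y$ defined by $F(x) = (\pi(x), f(x))$, where $\pi$ is orthogonal projection to $V^\perp$, and the measure $\Lambda := \int_{E'} \delta_a \otimes \nu_a \, d\mu(a)$ on $V^\perp \times Y$.

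For $r > 0$, cover $B_{\reals^n}(0, 2L)$ by a bounded-overlap family of balls $\{B_i\}$ of radius $r$. The Morrey--Sobolev inequality for metric-space-valued Sobolev mappings gives $\diam f(B_i) \leq C r^{1-n/p} M_i$, where $M_i := \|g\|_{L^p(2B_i)}$ and $g \in L^p(\reals^n)$ is a fixed upper gradient of $f$; meanwhile $\pi(B_i)$ is a ball in $V^\perp$ of radius $r$. Since $F(B_i)$ lies inside the cylinder $\pi(B_i) \times B_Y(f(x_i), C r^{1-n/p} M_i)$, the product structure of $\Lambda$ and the two Frostman bounds combine to give
\[
\Lambda(F(B_i)) \leq \mu(\pi(B_i)) \cdot (C r^{1-n/p} M_i)^{\alpha'}.
\]
In the opposite direction, since $F(B_{\reals^n}(0, 2L)) \supseteq \{a\} \times f((a+V) \cap B(0, L))$ for every $a \in E'$, subadditivity yields $\sum_i \Lambda(F(B_i)) \geq c \, \mu(E')$. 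Summing the previous display and applying the weighted H\"older inequality with exponents $p/\alpha'$ and $p/(p-\alpha')$ (valid since $\alpha' < \alpha \leq p$),
\[
\sum_i \mu(\pi(B_i)) M_i^{\alpha'} \leq \left( \sum_i \mu(\pi(B_i)) M_i^p \right)^{\alpha'/p} \left( \sum_i \mu(\pi(B_i)) \right)^{1-\alpha'/p}.
\]
Bounded overlap of $\{2B_i\}$ together with $\mu(\pi(B_i)) \leq r^{\beta'}$ gives $\sum_i \mu(\pi(B_i)) M_i^p \lesssim r^{\beta'} \|g\|_p^p$, while bounded overlap of $\{\pi(B_i)\}$ in $V^\perp$ and the $V$-slab structure give $\sum_i \mu(\pi(B_i)) \lesssim (L/r)^m \mu(E')$. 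Substituting and collecting $r$-exponents,
\[
c \, \mu(E') \lesssim r^{\eta}, \quad \text{where } \eta = \frac{\alpha'}{p}\bigl(\beta' - \beta(p, m, \alpha')\bigr) > 0,
\]
which is a contradiction as $r \to 0^+$.

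The main technical obstacle is the simultaneous coupling of the two Frostman measures with the Morrey estimate through the weighted double-counting. The lift $F = (\pi, f)$ is what allows the product-like measure $\Lambda$ to separate different cosets of $V$ in the target $V^\perp \times Y$, preventing coincidences of $f$-values across cosets from corrupting the count. A secondary delicate point is the preliminary reduction producing the uniform lower bound $\nu_a(Y) \geq c > 0$; this requires a countable exhaustion to simultaneously fix $L$, the content threshold $c$, and a sub-level set of $E_\alpha$ whose Hausdorff dimension still exceeds $\beta$. Finally, the strict monotonicity of $\beta(p, m, \cdot)$ is what allows the slight perturbation $\alpha' < \alpha$ needed to keep $\eta$ strictly positive.
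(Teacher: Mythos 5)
Your argument is correct, and the exponent bookkeeping at the end comes out exactly right: with your choices one indeed gets $c\,\mu(E')\lesssim r^{\eta}$ with $\eta=\tfrac{\alpha'}{p}\bigl(\beta'-\beta(p,m,\alpha')\bigr)>0$, which is the desired contradiction. However, your route is genuinely different from the one the paper leans on (Gehring's method, implemented in Section \ref{discussion section}; see Proposition \ref{minkowski}, the Carath\'eodory measure $\Phi$, Proposition \ref{big alpha}, and its metric-space form, Theorem \ref{metric theorem}). The paper fixes a single coset and proves a Morrey--Minkowski criterion (finiteness of $\liminf_{r\to 0} r^{-[(n-m)-p(1-m/\alpha)]}\int_{\nbhd(a+V,r)}g^p$ forces $\cH^{\alpha}(f((a+V)\cap K))<\infty$), then builds the tube measure $\Phi(B(a,r))=\int_{\pi\inv(B(a,r))\cap K'}g^p$ and differentiates it (Radon--Nikodym) against a Frostman measure on the exceptional set to verify that criterion almost everywhere. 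You instead compress everything into one multi-scale double counting: Frostman measures both on the exceptional set and on each slice image in $Y$, the graph-type lift $F=(\pi,f)$, and at each scale $r$ a weighted H\"older inequality combined with the multiplicity bound $\sum_i\mu(\pi(B_i))\lesssim (L/r)^m\mu(E')$. What this buys: no measure differentiation and no Carath\'eodory construction, a single clean inequality per scale. What it costs: you need Frostman's lemma in the metric target (legitimate here, since $f((a+V)\cap\overline{B}(0,L))$ is compact, but it is an extra tool the paper's proof does not need), and your tube count $\lesssim (L/r)^m$ uses the Lipschitz projection and Euclidean volume of $r$-neighborhoods of $m$-planes in an essential way --- this is precisely the step whose failure in $\Heis^n$ forces the paper's tube-measure/slicing machinery, so the paper's organization is the one that generalizes. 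Two small points you should tighten: (1) defining $\Lambda=\int_{E'}\delta_a\otimes\nu_a\,d\mu(a)$ requires $a\mapsto\nu_a$ to be chosen measurably, which you have not arranged; this is easily avoided by running your inequality pointwise in $a$, namely $c\le\sum_i\nu_a\bigl(\{y:(a,y)\in F(B_i)\}\bigr)\le C\sum_i\bigl(r^{1-n/p}M_i\bigr)^{\alpha'}\mathbf{1}_{\pi(B_i)}(a)$ for each $a\in E'$, and then integrating $d\mu(a)$, which yields the same display without ever constructing $\Lambda$; (2) the exhaustion pieces $\{a:\cH^{\alpha'}_{\infty}(f((a+V)\cap\overline{B}(0,L)))\ge 1/j\}$ must be shown to be Borel before Davies' theorem and Frostman's lemma can be applied --- they are in fact closed, by the same compactness/continuity argument as Lemma \ref{union of compacts}.
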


In fact, a slightly different result was stated in \cite{BMT}: excluding the endpoint $\alpha = m$, the authors conclude that
$$\cH_{\reals^{n}}^\beta\left(\{a \in V^\perp: \dim{f(a+V)} > \alpha\} \right) =0,$$
which recovers the universal estimate given in Theorem \ref{Kaufman} as a special case.  Essentially the same proof yields Theorem \ref{BMT theorem} as stated above. Theorem \ref{BMT theorem} is also sharp as is demonstrated by examples in \cite{BMT}.

For extensions of Theorem \ref{BMT theorem} to the sub-critical ($p<n$) case, see Hencl--Honz{\'\i}k \cite{hh:subspaces}, and for generalizations and additional results for planar quasiconformal maps, see Bishop--Hakobyan \cite{bh:freq}.

Recent advances in analysis on metric spaces motivate extensions of the Sobolev theory to non-Euclidean source spaces. One of the most ubiquitous and important examples of such a source space is the Heisenberg group $\Heis^n$, a non-commutative Lie group equipped with a left-invariant metric arising from a non-integrable tangent plane distribution on $\reals^{2n+1}$, called the \emph{horizontal distribution}. Although $\Heis^n$ is Ahlfors $(2n+2)$-regular and supports a $1$-Poincar\'e inequality, conditions which ensure that a large portion of first-order Euclidean analysis remains valid, its fractal geometry differs drastically from that of Euclidean space.

The theory of Sobolev mappings on the Heisenberg groups arises naturally from the study of partial differential equations and quasiconformal mappings. Quasiconformal mappings in the Heisenberg groups feature prominently in Mostow's celebrated rigidity theorem for rank one symmetric spaces \cite{MostowBook}, \cite{MostowRemark}. Of particular importance is the fact that such mappings are absolutely continuous on almost every line tangent to the horizontal distribution, which leads to needed differentiability results. In essence, this result allows one to consider quasiconformal mappings on the Heisenberg group from both geometric and analytic perspectives. For an excellent overview of the theory of Heisenberg quasiconformal maps, see Kor\'anyi and Reimann \cite{KR}.

The aim of this paper is to provide an analog of Theorem \ref{BMT theorem} for Sobolev mappings defined on the Heisenberg groups $\Heis^n$. Our new results are Theorems \ref{main} and \ref{four corners intro}.  Relevant background information for this paper on Sobolev mappings defined on the Heisenberg groups is given in Section~\ref{notation section}.

We consider foliations of $\Heis^n$ by left cosets of arbitrary homogeneous subgroups that are tangent to the horizontal distribution. Such a subgroup $\V$ is called \emph{horizontal} and may be identified with an isotropic subspace $V$ of $\reals^{2n}$; its dimension, now denoted by $m$, satisfies $1 \leq m \leq n$. The set of leaves of this foliation is parameterized by the \emph{vertical complement} $\V^\perp=V^\perp \times \reals$, where $V^\perp$ is the Euclidean orthogonal complement of $V$ in $\reals^{2n}$ and the additional copy of $\reals$ corresponds to the $t$-axis in $\Heis^n$. This yields the semidirect decomposition $\Heis^n = \V^\perp \ltimes \V$, and allows us to define a mapping
\begin{equation}\label{vert proj}
\pi_{{\V}^\perp} \colon \Heis^n \to {\V}^\perp.
\end{equation}
The preimage of a point $a \in \V^\perp$ is the left coset $a*\V$, i.e., a leaf of the foliation under consideration. In case $m=1$ such leaves are precisely the integral curves of a horizontal vector field defining the one-dimensional horizontal subspace $\V$, which are used in the standard definition of the ACL property on the Heisenberg group \cite{KR}. These concepts are discussed further in Section~\ref{notation section} below.

To begin, the universal bound on dimension distortion by super-critical Sobolev mappings remains valid in the Heisenberg groups, as was shown in our previous work \cite[Theorem 4.1]{DimDistMetric}.

\begin{theorem}\label{Kaufman Heis}
Let $f$ be a continuous mapping in the Sobolev space $\W^{1,p}_{\loc}(\Heis^n;Y)$, $p > 2n+2$. For any subset $E \subeq \Heis^n$ with $\sigma$-finite $\cH_{\Heis^n}^{s}$ measure, it holds that $\cH^{\alpha}(f(E))=0$, where
\begin{equation}\label{Kaufman Heis alpha}
\alpha = \frac{ps}{p-(2n+2-s)}.
\end{equation}
Moreover, if $\cH^{2n+2}_{\Heis^n}(E)=0$, then $\cH^n(f(E))=0$ as well.
\end{theorem}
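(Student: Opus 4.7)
The plan is to adapt the classical Kaufman argument to the sub-Riemannian setting by combining the super-critical Sobolev--Morrey embedding on $\Heis^n$ with a covering estimate. Since $p > Q := 2n+2$, every mapping $f \in \W^{1,p}_{\loc}(\Heis^n;Y)$ admits a continuous representative possessing a $p$-integrable upper gradient $\rho$, and the Heisenberg Morrey inequality yields the pointwise oscillation bound
\[
d_Y(f(x),f(y)) \leq C\, d_{\Heis^n}(x,y)^{1-Q/p} \left(\int_{B(x,K d_{\Heis^n}(x,y))} \rho^p\right)^{1/p}
\]
for constants $C,K$ depending only on $n$ and $p$. In particular, for any Heisenberg ball $B=B(z,r)$,
\[
\diam f(B) \leq C r^{1-Q/p} \|\rho\|_{L^p(KB)}.
\]

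Next, reducing to the case $\cH^s_{\Heis^n}(E) < \infty$ by a countable decomposition, I would exploit the fact that $s<Q$ forces $E$ to have zero Haar measure in $\Heis^n$. By absolute continuity of the integral $\int \rho^p$, for every $\eta>0$ there exists an open set $U \supseteq E$ with $\|\rho\|_{L^p(U)} < \eta$. For each $\delta>0$, cover $E$ by Heisenberg balls $\{B_i = B(z_i,r_i)\}$ with $r_i<\delta$, $KB_i \subseteq U$, and $\sum_i r_i^s \leq \cH^s_{\Heis^n}(E)+1$; a Besicovitch-type covering theorem on $\Heis^n$ permits selecting the cover so that the enlarged balls $\{KB_i\}$ have bounded overlap.

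Applying the diameter bound above and then H\"older's inequality with conjugate exponents $p/(p-\alpha)$ and $p/\alpha$ (both valid since $\alpha<p$, which follows from $p>Q$), one obtains
\[
\sum_i (\diam f(B_i))^\alpha \leq C \left(\sum_i r_i^{\alpha(p-Q)/(p-\alpha)}\right)^{(p-\alpha)/p} \left(\sum_i \|\rho\|_{L^p(KB_i)}^p\right)^{\alpha/p}.
\]
The formula $\alpha = ps/(p-(Q-s))$ is forced by demanding $\alpha(p-Q)/(p-\alpha) = s$; with this choice, bounded overlap yields $\sum_i \|\rho\|_{L^p(KB_i)}^p \leq C\|\rho\|_{L^p(U)}^p < C\eta^p$, so
\[
\cH^\alpha_{(2K+1)\delta}(f(E)) \leq C(\cH^s_{\Heis^n}(E)+1)^{(p-\alpha)/p}\, \eta^\alpha.
\]
Sending $\delta\to 0$ and then $\eta\to 0$ gives $\cH^\alpha(f(E))=0$.

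For the borderline case when $\cH^Q_{\Heis^n}(E)=0$, the scheme is the same with $s=\alpha=Q$, except that the packing estimate $\sum_i r_i^s \leq \cH^s_{\Heis^n}(E)+1$ must be replaced by a Vitali-type cover of $E$ inside the neighborhood $U$ satisfying $\sum_i r_i^Q \lesssim |U|_{\Heis^n}$, a quantity made arbitrarily small by shrinking $U$. The principal obstacle is thus not the inequality chain itself but the sub-Riemannian covering machinery needed to realize bounded overlap for enlarged balls and to dominate the $s$-packing by $\cH^s_{\Heis^n}$; once the relevant Heisenberg Morrey embedding and Besicovitch/Vitali analogs are in hand, the argument is a direct translation of Kaufman's Euclidean proof of Theorem~\ref{Kaufman}.
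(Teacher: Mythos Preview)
The paper does not prove Theorem~\ref{Kaufman Heis} in the text; it is quoted from the authors' earlier work \cite[Theorem~4.1]{DimDistMetric}, so there is no in-paper argument to compare against directly. Your outline is the standard Kaufman scheme, and the chain of inequalities---Morrey's estimate \eqref{Morrey}, H\"older with the conjugate pair $p/(p-\alpha)$ and $p/\alpha$, and the algebraic identity $\alpha(p-Q)/(p-\alpha)=s$ forcing the value \eqref{Kaufman Heis alpha}---is correct and is essentially the route taken in \cite{DimDistMetric} as well.

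You do, however, understate the obstacle you flag at the end. The Besicovitch covering property \emph{fails} for the Kor\'anyi metric (and for the Carnot--Carath\'eodory metric) on $\Heis^n$; this was observed already by Kor\'anyi--Reimann and analyzed in detail by Rigot and Le~Donne--Rigot. Hence the sentence ``a Besicovitch-type covering theorem on $\Heis^n$ permits selecting the cover so that the enlarged balls $\{KB_i\}$ have bounded overlap'' is not a routine technicality---it is literally false for Kor\'anyi balls, and even in $\R^n$ Besicovitch does not give bounded overlap of \emph{enlarged} balls without an extra step. Two repairs are available. One is extrinsic: Le~Donne and Rigot exhibit homogeneous distances on $\Heis^n$ for which BCP \emph{does} hold; since all homogeneous distances on a Carnot group are pairwise bi-Lipschitz, you may run the covering step with balls in such a metric and transfer both the Hausdorff-measure estimate and the Morrey bound back to $d_{\Heis^n}$. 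The other is to abandon bounded overlap altogether and argue, as \cite{DimDistMetric} must, in the generality of doubling Poincar\'e spaces where Besicovitch is unavailable, using only the $5r$-covering lemma together with the doubling property. Either route turns your sketch into a complete proof.
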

Theorem \ref{Kaufman Heis} is sharp. Indeed, the set of such mappings that distort a given subset by the maximal amount is prevalent \cite[Theorems~1.3 and 1.4]{DimDistMetric}.

The fact that Euclidean projection onto a subspace is Lipschitz plays a major role in Gehring's proof of the ACL property of Euclidean quasiconformal mappings and in the proof of Theorem \ref{BMT theorem}.   However, in Heisenberg group, the projection $\pi_{\V^\perp}$ defined above is \emph{not} Lipschitz on compact sets. This complication in the proof of the ACL property of quasiconformal maps on the Heisenberg group and other Carnot groups played a key role in motivating the work of Heinonen and Koskela that inaugurated the modern theory of analysis on metric spaces \cite{Definitions}, \cite{Acta}. We review this complication and Mostow's amended proof for the ACL property of Heisenberg quasiconformal maps in section \ref{discussion section}. Our main results rely heavily on ideas drawn from both Gehring's original argument in the Euclidean setting and Mostow's amended proof in the Heisenberg setting.

A key point in this work is that although $\pi_{\V^\perp}$ fails to be Lipschitz on compact sets when the target is metrized as a subset of $\Heis^n$, it is Lipschitz on compact sets when $\V^\perp$ is metrized as a subset of the \emph{Euclidean} space $\reals^{2n+1}$. Our main results is as follows: given a target dimension $\alpha$ between $m$ and the universal upper bound given in \eqref{Kaufman Heis alpha}, we quantitatively estimate, in terms of Hausdorff dimension in $\V^\perp$ \emph{equipped with the Euclidean metric from $\reals^{2n+1}$}, the size of the set of left cosets of $\V$ that are mapped onto a set of dimension at least $\alpha$ by any super-critical Sobolev mapping. We note that
$$\dim_{\Heis^n} \V^\perp = 2n+2-m,$$
while
$$\dim_{\reals^{2n+1}} \V^\perp= 2n+1-m.$$
Using the Euclidean Hausdorff dimension to measure of the size of the set of left-cosets of a horizontal supgroup that are distorted by a Sobolev mapping allows us to state the main result of the paper. See Figure \ref{plot1}.
\begin{theorem}\label{main}
Let $f$ be a continuous mapping in the Sobolev space $\W^{1,p}_{\loc}(\Heis^n;Y)$, $p > 2n+2$. Given a horizontal subgroup $\V$ of $\Heis^n$ of dimension $1 \leq m \leq n$, and
$$\alpha \in \left[m,\frac{pm}{p-(2n+2-m)}\right],$$
it holds that
$$\dim_{\reals^{2n+1}}\{a \in \V^\perp:  \dim f(a*\V) \geq \alpha\} \leq \beta(p,m,\alpha),$$
where
\begin{equation}\label{beta definition}
\beta(p,m,\alpha) = \begin{cases}
(2n+1-m) - \frac{p}{2}\left(1-\frac{m}{\alpha}\right)& \alpha \in \left[m,\frac{pm}{p-2}\right], \\
(2n+2-m) - p\left(1-\frac{m}{\alpha}\right) & \alpha \in \left[\frac{pm}{p-2},\frac{pm}{p-(2n+2-m)}\right]. \\
\end{cases}\end{equation}
\end{theorem}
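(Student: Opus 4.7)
Following the strategy of Balogh--Monti--Tyson for Theorem~\ref{BMT theorem}, the plan is to combine the Heisenberg Morrey inequality with a Frostman measure on the exceptional set of cosets, and then run a covering-plus-H\"older argument to derive a contradiction. Mostow's Euclidean-Lipschitz property of $\pi_{\V^\perp}$ from Section~\ref{discussion section} is what allows Frostman information based on the Euclidean metric on $\V^\perp$ to interact with the Heisenberg Sobolev estimate.

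Concretely, take an $L^p$ upper gradient $g$ of $f$. Since $p>Q:=2n+2$, Morrey's inequality on $\Heis^n$ yields
\[
\diam_Y f(B^{\Heis^n}(x,r))\leq Cr^{1-Q/p}\Bigl(\int_{B^{\Heis^n}(x,Cr)}g^p\Bigr)^{1/p}
\]
for any Heisenberg ball in a fixed bounded open $U\subset\Heis^n$. Write $E:=\{a\in\V^\perp:\dim f(a*\V)\geq\alpha\}\cap\pi_{\V^\perp}(U)$ and argue by contradiction: assume $\dim_{\reals^{2n+1}}E>\beta(p,m,\alpha)$, so that Frostman's lemma provides a Radon measure $\mu$ on $E$ with $\mu(B^{\reals^{2n+1}}(a,s))\leq Cs^{\beta'}$ for some $\beta'>\beta(p,m,\alpha)$. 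Fix $\alpha'\in(m,\alpha)$. For each scale $r$, take a bounded-multiplicity cover of $U$ by Heisenberg balls $B_i=B^{\Heis^n}(x_i,r)$, only $O(r^{-m})$ of which meet any given coset. Morrey produces a cover of $f(a*\V\cap U)$ by sets of diameter $\leq Cr^{1-Q/p}u_i$ with $u_i:=\|g\|_{L^p(CB_i)}$, and integrating against $\mu$ and swapping sum and integral yields
\[
\int\cH^{\alpha'}_{\delta_r}(f(a*\V\cap U))\,d\mu(a)\leq Cr^{\alpha'(1-Q/p)}\sum_i u_i^{\alpha'}w_i,
\]
where $w_i:=\mu(\pi_{\V^\perp}(B_i))$ and $\delta_r:=Cr^{1-Q/p}\|g\|_{L^p}$. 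Mostow's observation forces $\pi_{\V^\perp}(B_i)\subset B^{\reals^{2n+1}}(\pi_{\V^\perp}(x_i),Cr)$, hence $w_i\leq Cr^{\beta'}$. A weighted H\"older inequality with exponents $(p/\alpha',p/(p-\alpha'))$, combined with the bounds $\sum_i u_i^pw_i\leq Cr^{\beta'}\|g\|_{L^p}^p$ (bounded overlap and Fubini) and $\sum_i w_i\leq Cr^{-m}$ (from the per-coset count), reduces the exponent of $r$ in the last display to $\alpha'-m+\alpha'(\beta'+m-Q)/p$, which is positive precisely when $\beta'>(2n+2-m)-p(1-m/\alpha')$. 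Letting $r\to 0$ and $\alpha'\nearrow\alpha$ then contradicts $\cH^{\alpha'}(f(a*\V))=\infty$ on $\mu$-positive sets, proving the second regime $\alpha\in[pm/(p-2),pm/(p-(2n+2-m))]$.

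For the first regime $\alpha\in[m,pm/(p-2)]$, this bound must be sharpened by exploiting that $\pi_{\V^\perp}(B^{\Heis^n}(x,r))$ is not a Euclidean $r$-ball but an anisotropic region of horizontal extent $r$ and vertical extent $r^2$, reflecting that the Heisenberg $t$-coordinate scales as the square of the horizontal coordinate. Replacing the bound on $w_i$ by one adapted to this anisotropy, and coupling it with a Morrey refinement that accounts separately for horizontal and vertical Sobolev contributions (the vertical direction carrying twice the Sobolev weight via the commutator identity $T=[X_j,Y_j]$), replaces $p$ by $p/2$ in front of $(1-m/\alpha)$ and the Heisenberg ambient dimension $2n+2-m$ by the Euclidean $2n+1-m$. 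The two regime formulas coincide at $\alpha=pm/(p-2)$, giving a continuous upper bound on the full admissible range.

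The principal technical obstacle is the first regime: one must carefully balance the anisotropic Euclidean Frostman estimate on $\V^\perp$ against the Heisenberg Morrey inequality, tracking horizontal and vertical scales separately in the covering. The coefficient $p/2$ is delicate and encodes the precise way Mostow's Euclidean-Lipschitz idea and Mattila-style slicing combine; its appearance is what distinguishes the Heisenberg statement from its Euclidean antecedent.
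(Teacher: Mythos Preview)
Your argument for the second regime $\alpha\in[pm/(p-2),\,pm/(p-(2n+2-m))]$ is correct and is essentially the paper's Gehring-style argument (Proposition~\ref{big alpha}), just packaged via a weighted H\"older inequality rather than a Carath\'eodory measure plus Radon--Nikodym; the two are equivalent here.

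The first regime, however, contains a genuine gap. Your mechanism rests on the claim that $\pi_{\V^\perp}(B_{\Heis}(x_i,r))$ is an anisotropic box of horizontal extent $r$ and vertical extent $r^2$. This is true only for balls centered on $\V^\perp$ itself. For a general center $x_0=a_0*v_0$ with $v_0\in\V$ nonzero, a short computation with the group law shows that the $t$-component of $\pi_{\V^\perp}(x_0*y)-\pi_{\V^\perp}(x_0)$ contains a term of size $\omega(v_0,\,y_{V^\perp})=O(|v_0|\,r)$, so the projected ball has vertical extent comparable to $r$, not $r^2$. Since the balls $B_i$ in your cover are centered at arbitrary points of $U$ (indeed, they must range along each coset, where $|v_0|$ is of unit size), the improved Frostman bound $w_i\lesssim r^{2\beta'-(2n-m)}$ is unavailable. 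This is precisely the failure of $\pi_{\V^\perp}$ to be a David--Semmes $m$-regular foliation noted in the introduction, and it cannot be repaired by any ``Morrey refinement'': Morrey's inequality is used only once, exactly as stated, in both regimes.

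The paper obtains the $p/2$ coefficient by a different route. The anisotropy you are after is real, but it lives at the level of \emph{tubes} $\nbhd_{\Heis}(a*\V,r)$ around whole cosets, not individual balls: Lemma~\ref{disjoint tubes} shows that two such tubes with parameters $a_1,a_2$ lying on the same line in $\V^\perp$ (close to the $t$-direction) can intersect only if the parameters are within $O(r^2)$ of each other. To leverage this one must drop one more dimension: project $\V^\perp$ further onto a generic hyperplane $\Theta^\perp$, build the analogous Radon measure there, and use Mattila's projection and slicing theorems to pass between $\beta$-dimensional information on $\V^\perp$ and $(\beta-(w-1))$-dimensional information on a generic fiber $\pi_{\Theta^\perp}^{-1}(\hat a)$. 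The tube-disjointness then converts the Sobolev estimate on each fiber into a packing bound with the $r^2$ scale, which is what produces the factor $p/2$. Your closing remark about ``Mattila-style slicing'' points in the right direction, but the substantive geometric input---tube separation rather than ball anisotropy---and the two-step projection architecture are missing from the proposal.
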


\begin{figure}[h]
\begin{center}
\input{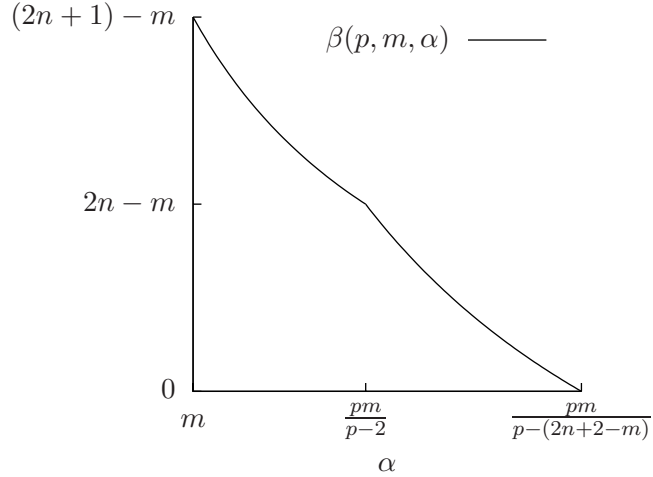}
\caption{The quantity $\beta(p,m,\alpha)$ of Theorem \ref{main} as a function of $\alpha$.}\label{plot1}
\end{center}
\end{figure}
	
In the special case of the foliation of $\Heis = \Heis^1$ by left cosets of the $x$-axis $\V_x$, Theorem \ref{main} yields
$$\dim_{\reals^3}\{a=(0,y,t) \in \Heis:  \dim f(a*\V_x) \geq \alpha\} \leq \begin{cases}
2- \frac{p}{2}\left(1-\frac{1}{\alpha}\right) & \alpha \in \left[1,\frac{p}{p-2}\right],\\
3 - p\left(1-\frac{1}{\alpha}\right) & \alpha \in \left[\frac{p}{p-2},\frac{p}{p-3}\right]. \\ \end{cases}$$

The bifurcated nature of the conclusion of Theorem \ref{main} is typical for the Heisenberg groups; in this case, the bifurcation occurs when
$$\beta\left(p,m,\frac{pm}{p-2}\right) = 2n-m = \dim_{\reals^{2n}}V^\perp.$$

Theorem \ref{main} provides the expected estimates at the endpoints of the interval of definition of $\beta$ as a function of $\alpha$. The estimate when $\alpha=m$ is trivial to verify, and taking $f \colon \Heis^n \to \Heis^n$ to be the identity mapping shows that it cannot be improved. On the other hand, when $\alpha$ is the universal upper bound provided by Theorem \ref{Kaufman Heis}, we have
$$\beta\left(p,m,\frac{pm}{p-((2n+2)-m)}\right)=0,$$
as expected. However, we do not recover Theorem \ref{Kaufman Heis} as a special case as the conclusion of Theorem~\ref{main} does not assert that the exceptional set $\{a \in \V^\perp:  \dim f(a*\V) > \alpha\}$ has $\cH_{\reals^{2n+1}}^\beta$ measure zero. This issue is discussed in greater detail in Question \ref{recover} in Section \ref{questions section} below.

The estimate
$$\dim_{\reals^{2n+1}}\{a \in \V^\perp:  \dim f(a*\V) \geq \alpha\} \leq \beta(p,m,\alpha)$$
for the Euclidean Hausdorff dimension of the exceptional set can be converted into an estimate of its Heisenberg Hausdorff dimension:
$$\dim_{\Heis^n}\{a \in \V^\perp:  \dim f(a*\V) \geq \alpha\} \leq \beta_{\Heis^n}(p,m,\alpha).$$
The value of $\beta_{\Heis^n}$ can be computed from $\beta$ using the Dimension Comparison Theorem of \cite{Comparison} and \cite{Comp}. The resulting estimate is unlikely to be sharp and is not very aesthetic, and so we omit it. A conjectured optimal value for $\beta_{\Heis^n}$ is provided in Question \ref{ideal} in Section \ref{questions section} below.

The sharpness of Theorem \ref{main} throughout the interval of definition of $\beta$ is unclear.  However, we are able to construct examples of Sobolev mappings that distort the dimension of a large set of leaves by a small amount. Our construction is based on a similar example given in \cite{BMT}.

\begin{theorem}\label{four corners intro} Let $\V_x$ denote the horizontal subgroup defined by the $x$-axis in $\Heis$, and let $p>4$. For each
$$\alpha \in \left(1,\frac{p}{p-2}\right)$$ there is a compact set $E_\alpha \subeq \V_x^\perp$ and a continuous mapping $f \in \W^{1,p}(\Heis;\reals^2)$ such that
$$0<\cH_{\reals^3}^{2-p\left(1-\frac{1}{\alpha}\right)}(E_\alpha) < \infty$$
and $\dim f(a*\V) \geq \alpha$ for every $a \in E_\alpha$.
\end{theorem}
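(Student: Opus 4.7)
The plan is to adapt to the Heisenberg setting the self-similar construction from \cite{BMT}. Set $s = 2 - p(1-1/\alpha)$, which lies in $(0, 2)$ by the hypotheses on $p$ and $\alpha$. First I would construct $E_\alpha$ in $\V_x^\perp \cong \reals^2$ as a standard four-corners Cantor set, iteratively replacing each square by four corner subsquares with contraction ratio $r = 4^{-1/s}$; standard mass-distribution arguments then give $0 < \cH_{\reals^3}^s(E_\alpha) < \infty$.

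Next I would fix a model fractal curve $\psi \colon [0,1] \to \reals^2$ of Hausdorff dimension $\alpha$, of lacunary form $\psi(x) = \sum_k c_k\, \psi_0(r^{-k} x)$ with $\psi_0$ a fixed compactly supported building block and $c_k \sim r^{k/\alpha}$. For each stage-$k$ square $Q_{k,j}$ with center $a_{k,j} = (0, y_{k,j}, t_{k,j})$, let $L_{k,j} = a_{k,j} * \V_x$ be the associated leaf. In the coordinates $u = y - y_{k,j}$, $v = t - t_{k,j} + x\, y_{k,j}/2$, which trivialize $\pi_{\V_x^\perp}$ near $L_{k,j}$, define a localized bump
$$f_{k,j}(x,y,t) = c_k\, \psi_0(r^{-k} x)\, \chi\!\left(\tfrac{u}{r^k},\, \tfrac{v}{r^k}\right),$$
where $\chi$ is a smooth compactly supported cutoff on $\reals^2$. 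Set $f = \sum_{k,j} f_{k,j}$; the supports at each stage are pairwise disjoint. For each $a \in E_\alpha$ and each $k$ there is a unique $j(k)$ with $a \in Q_{k, j(k)}$, so $f|_{L_a}(x) = \psi(x)$, whence $\dim f(a * \V_x) \geq \dim \psi([0,1]) = \alpha$.

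The main technical task is to verify $f \in \W^{1,p}(\Heis; \reals^2)$. The horizontal vector fields $X = \partial_x - (y/2)\partial_t$ and $Y = \partial_y + (x/2)\partial_t$ satisfy $|X f_{k,j}|, |Y f_{k,j}| \lesssim c_k r^{-k}$ on a support of Lebesgue volume $\lesssim r^{2k}$, and since there are $4^k = r^{-sk}$ disjointly supported bumps at stage $k$,
$$\sum_j \int_\Heis |\nabla_H f_{k,j}|^p \lesssim r^{-sk} \cdot r^{2k} \cdot (c_k r^{-k})^p = r^{k\bigl(2 - s - p(1 - 1/\alpha)\bigr)}.$$
With $s = 2 - p(1-1/\alpha)$ this exponent vanishes, and the borderline divergence in $k$ must be absorbed by damping the amplitudes to $c_k \sim r^{k/\alpha}/k^\gamma$ with $\gamma > 1/p$; this logarithmic correction does not affect $\dim \psi([0,1]) = \alpha$ but renders the series summable.

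The main obstacle is precisely this borderline balance between fractal amplitude and $L^p$-integrability, coupled with the geometric subtlety that $\pi_{\V_x^\perp}$ is not Lipschitz in the Heisenberg metric. This forces the bumps to be supported in Heisenberg-adapted tubes around the leaves rather than in naive Euclidean neighborhoods, and requires computing the horizontal gradient in the twisted coordinates defined by $X$ and $Y$, carefully tracking the commutator structure of $\Heis$.
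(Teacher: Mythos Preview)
Your outline follows the paper's construction closely --- four-corner Cantor set of dimension $s=2-p(1-1/\alpha)$, lacunary sum of bumps localized to tubes around the leaves through the level-$k$ squares, borderline $L^p$ computation with polynomial damping --- but with one crucial difference: you work with a \emph{deterministic} model curve $\psi$ and simply assert $\dim\psi([0,1])=\alpha$. The paper instead takes the bump coefficients to be i.i.d.\ random vectors $\xi_B\in\ovl{B}_{\reals^N}(0,1)$ and shows, via an expected-energy computation $\E_\xi\, I_{\alpha'}\bigl((f_\xi)_\#(\cH^1\restrict a*\V_x)\bigr)<\infty$ together with Frostman's criterion, that almost surely $\dim f_\xi(a*\V_x)\ge\alpha$ for $\cH^s$-a.e.\ $a\in F^\alpha$.

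This is a genuine gap in your proposal, not a stylistic choice. Lower bounds on the Hausdorff dimension of the \emph{image} of an explicit Weierstrass-type map $x\mapsto\sum_k c_k\psi_0(r^{-k}x)$ into $\reals^2$ are notoriously difficult; nothing prevents the image from collapsing to dimension below $\alpha$ for a fixed choice of $\psi_0$, and you offer no argument here. The randomization is exactly the missing idea that converts this hard pointwise problem into a tractable averaged one. There are also smaller issues: your coordinate $v=t-t_{k,j}+xy_{k,j}/2$ does not trivialize $\pi_{\V_x^\perp}$ (the fiber coordinate of the vertical projection involves $xy$, not $xy_{k,j}$, so as written $v$ is not constant along $L_a$ for $a\ne a_{k,j}$ and the claimed identity $f|_{L_a}=\psi$ fails); if $\psi_0$ is literally compactly supported then the support volume is $r^{3k}$ rather than $r^{2k}$, which alters your borderline exponent; and summability of $\sum_k\|\nabla_H f_k\|_{L^p}$ requires $\gamma>1$, not $\gamma>1/p$.
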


\begin{figure}[h]
\begin{center}
\input{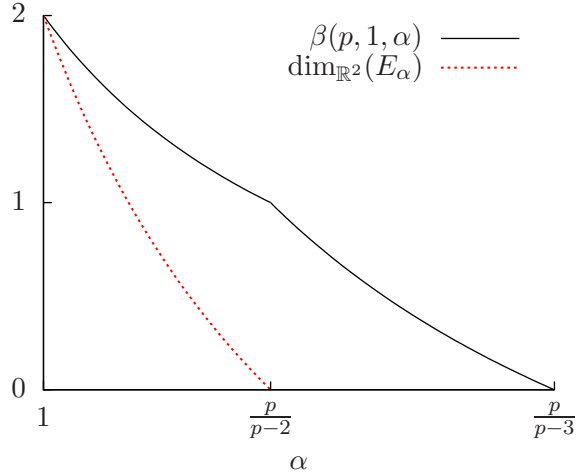}
\caption{The quantity $\beta(p,1,\alpha)$ of Theorem \ref{main} and the dimension of the set $E_\alpha$ of Theorem \ref{four corners intro}.}\label{plot2}
\end{center}
\end{figure}

We note that the dimension of $E$ given above is strictly smaller that the corresponding estimate given by $\beta(p,1,\alpha)$ in Theorem \ref{main}. In particular, the dimension of $E$ tends to $0$ when $\alpha$ tends to $\frac{p}{p-2}$, whereas the estimate given by Theorem \ref{main} in this setting is
$$\beta\left(p,1,\frac{p}{p-2}\right)=1.$$ See Figure \ref{plot2}.

A first attempt at producing a theorem similar to Theorem~\ref{main} was made by adapting the Euclidean proof of Theorem \ref{BMT theorem} to the setting of a metric measure space $X$ that supports a Poincar\'e inequality and is equipped with a locally David--Semmes $s$-regular foliation $\pi\colon X \to W$, $s \geq 0$.  Such foliations, which were studied by David and Semmes as analogs of projections \cite{RegularBetween}, are locally Lipschitz mappings having the property that the localized preimage of a ball of radius $r$ in $W$ can be covered by approximately $r^{-s}$ balls of radius $r$ in $X$. We gave the following theorem in \cite{DimDistMetric}; we refer to that paper for the relevant definitions and background.

\begin{theorem}\label{foliation intro} Let $Q\geq 1$ and $0<s<Q<p$.  Let  $(X,d_X,\mu)$ be a metric measure space that is proper, locally homogeneous of dimension at most $Q$, supports a local $Q$-Poincar\'e inequality, and is equipped with a locally David--Semmes $s$-regular foliation $(X,W,\pi)$. Let $Y$ be an arbitrary metric space and $f \colon X \to Y$ a continuous mapping with an upper gradient in $\Lloc^p(X)$. For $\alpha \in \left( s,\frac{ps}{p-Q+s}\right\rbrack$, it holds that
$$\dim_W \{a \in W: \dim f(\pi\inv(a))\geq \alpha\} \leq (Q-s)-p\left(1-\frac{s}{\alpha}\right).$$
\end{theorem}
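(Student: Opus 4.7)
The plan is to adapt the argument of Balogh, Monti, and Tyson for Theorem~\ref{BMT theorem} to the metric-measure setting, with the David--Semmes $s$-regularity of $\pi$ playing the role of the orthogonal decomposition $\reals^n = V \oplus V^\perp$ and the Morrey--Sobolev embedding in a $Q$-PI space replacing its Euclidean counterpart. The key initial ingredient is the pointwise H\"older-type oscillation estimate available whenever $p > Q$: iteration of the $Q$-Poincar\'e inequality along a telescoping chain of dyadic balls yields
\begin{equation*}
d_Y(f(x), f(y)) \leq C\, d_X(x,y)^{1-Q/p}\, \|g\|_{L^p(\Lambda B)}
\end{equation*}
whenever $x, y \in B$ and $B$ is a ball of radius at most some fixed $R > 0$, where $g \in L^p_{\loc}(X)$ is an upper gradient of $f$ and $\Lambda \geq 1$ is a universal dilation constant.

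Arguing by contradiction, suppose $E_\alpha := \{a \in W : \dim f(\pi\inv(a)) \geq \alpha\}$ has $\dim_W E_\alpha > \beta := (Q-s) - p(1-s/\alpha)$. Pick $\alpha' \in (s, \alpha)$ and $\beta' \in (\beta, \dim_W E_\alpha)$ so close to $\alpha$ and $\dim_W E_\alpha$ respectively that the strict inequality $\beta' > (Q-s) - p(1-s/\alpha')$ still holds (possible by continuity). By Frostman's lemma together with a routine measurability argument, there exist a compact $E' \subseteq E_\alpha$, a compact $K \subset X$ with $\pi(K) \supseteq E'$, a nontrivial Radon measure $\nu$ on $E'$ with $\nu(B_W(a,r)) \leq r^{\beta'}$, and a constant $c > 0$ so that $\mathcal{H}^{\alpha'}_\infty(f(\pi\inv(a) \cap K)) \geq c$ for every $a \in E'$. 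For each $a \in E'$ and $\delta > 0$, choose a cover $\{B_j^a := B(x_j^a, r_j^a)\}_j$ of $\pi\inv(a) \cap K$ with $r_j^a < \delta$; by the Morrey estimate each image ball has radius at most $\rho_j^a := C(r_j^a)^{1-Q/p}\|g\|_{L^p(\Lambda B_j^a)}$, and the lower bound on Hausdorff content forces $\sum_j (\rho_j^a)^{\alpha'} \geq c'$ independently of $\delta$.

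The heart of the proof is a double-counting argument over dyadic scales. Discretize each cover by $r_j^a \in [2^{-k-1}, 2^{-k}]$, and at each scale replace $B_j^a$ by a nearby ball from a maximal $2^{-k}$-separated net $\mathcal{B}_k \subset K$. Integrating $\sum_j (\rho_j^a)^{\alpha'} \geq c'$ against $\nu$ and swapping summation, the Lipschitz continuity of $\pi$ combined with the Frostman estimate yields $\nu(\pi(B(x, C2^{-k}))) \leq C\, 2^{-k\beta'}$, while the $s$-DS regularity of $\pi$ bounds the fiber-covering count $|\{x \in \mathcal{B}_k : \pi(x) \in B_W(a, C2^{-k})\}| \leq C\, 2^{ks}$. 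H\"older's inequality with exponent pair $(p/\alpha', p/(p-\alpha'))$, followed by the bounded-overlap estimate $\sum_x \|g\|_{L^p(\Lambda B(x, 2^{-k}))}^p \lesssim \|g\|_{L^p(K')}^p$ and exponent arithmetic, produces
\begin{equation*}
c'\, \nu(E') \lesssim \|g\|_{L^p(K')}^{\alpha'}\, \nu(E')^{(p-\alpha')/p} \sum_{k \geq \log_2(1/\delta)} 2^{-k\alpha'\eta/p}, \qquad \eta := \beta' - \bigl((Q-s) - p(1-s/\alpha')\bigr) > 0.
\end{equation*}
Letting $\delta \to 0$ sends the tail of the geometric series to $0$, contradicting $\nu(E') > 0$.

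The main obstacle lies in this last double-counting step. The $s$-DS regularity is a bilateral metric-measure statement and not a Fubini-type product decomposition, so deriving the fiber-packing count $\lesssim 2^{ks}$ and correctly pairing each fiber with a scale-adapted cover of $K$ requires a careful geometric argument that replaces the explicit product structure used in \cite{BMT}. A secondary technical difficulty is that the upper-gradient theory lives on $X$ and not on individual fibers $\pi\inv(a)$, so all oscillation estimates along a fiber must be routed through the ambient Morrey inequality and the global $L^p$-norm of $g$, without recourse to any intrinsic Sobolev theory on the fibers themselves.
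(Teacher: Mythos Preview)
The paper does not actually prove Theorem~\ref{foliation intro}; it is quoted from the authors' earlier work \cite{DimDistMetric}. However, the machinery of Section~3.2 (Propositions~\ref{big alpha} and~\ref{minkowski}, together with the Carath\'eodory measure $\Phi$) is precisely the relevant argument, and Remark~\ref{metric statement} records that this argument yields Theorem~\ref{metric theorem}, which in turn implies Theorem~\ref{foliation intro} when $W$ is Euclidean.

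Your strategy is correct and uses the same ingredients as the paper's Section~3.2: the Morrey estimate from the $Q$-Poincar\'e inequality, a Frostman measure on the exceptional set, the David--Semmes $s$-regularity to count net points in a fiber tube, and H\"older's inequality. The organizational difference is that the paper packages the argument via Radon--Nikodym differentiation. It first isolates the ``tube implies Hausdorff'' lemma (Proposition~\ref{minkowski}): if $\liminf_{r\to 0} r^{-\beta'}\int_{\nbhd(a*\V,r)\cap K'} g^p < \infty$ then $\cH^{\alpha'}(f(\text{fiber}))<\infty$; it then builds a Radon measure $\Phi$ on $W$ with $\Phi(B_W(a,r)) = \int_{\pi^{-1}(B_W(a,r))\cap K'} g^p$ and differentiates $\Phi$ against the Frostman measure $m$ to force the tube hypothesis $m$-almost everywhere. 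You instead integrate the content lower bound directly against $\nu$ and swap the order via Fubini, using $\nu(\{a : y \in \pi^{-1}(B_W(a,Cr))\}) = \nu(B_W(\pi(y),Cr)) \le (Cr)^{\beta'}$. The two arguments are equivalent; the paper's is more modular, yours is more direct.

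One simplification: your dyadic sum over $k \ge \log_2(1/\delta)$ is unnecessary. For each $a$ you may simply take the cover of $\pi^{-1}(a)\cap K$ by the net balls at a \emph{single} scale $2^{-k}$ (this is where the David--Semmes count $\lesssim 2^{ks}$ enters), run Morrey and H\"older as you do, raise to the power $p/\alpha'$, integrate against $\nu$, and Fubini. The resulting exponent of $2^{-k}$ is
\[
(p-Q) - s\,\frac{p-\alpha'}{\alpha'} + \beta' \;=\; \beta' - \Bigl((Q-s) - p\bigl(1-\tfrac{s}{\alpha'}\bigr)\Bigr) \;>\; 0,
\]
so letting $k\to\infty$ already forces $\nu(E')=0$. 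This is exactly the single-scale structure of the paper's Proposition~\ref{minkowski}.
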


Foliations of the Heisenberg group by \emph{right} cosets of horizontal or vertical subgroups fit nicely in to the framework of Theorem \ref{foliation intro}, as described in \cite{DimDistMetric}.  However, in the case of foliations by \emph{left} cosets of horizontal subgroups, Theorem \ref{foliation intro} is deficient.  It follows from the definitions that a leaf of a locally David--Semmes $s$-regular foliation, i.e., the preimage in $X$ of a point in $W$, has Hausdorff dimension no greater than $s$. However, the dimension of a leaf may be strictly smaller than $s$, a fact which makes Theorem \ref{foliation intro} inappropriate for the foliations by left cosets of horizontal subgroups as considered in this paper.

To wit, let $\V$ be a horizontal subgroup of $\Heis^n$ of dimension $m$. Although the projection mapping $\pi_{\V^\perp}$  is Lipschitz on compact sets when considered as a map into $\reals^{2n+1}$, it defines a $(m+1)$-foliation and not a $m$-foliation \cite[Section~6.3]{DimDistMetric}. Applying Theorem \ref{foliation intro} to this setting for
$$
\alpha \in \left(m+1, \frac{p(m+1)}{p-((2n+1)-m)} \right\rbrack,
$$
would result in replacing $\beta(p,m,\alpha)$ in \eqref{beta definition} by
\begin{equation}\label{Heis not Lipschitz estimate}
\beta(p,m,\alpha)=((2n+1)-m)-p\left(1-\frac{m+1}{\alpha}\right),
\end{equation}
which does not confirm the universal bound from Theorem \ref{Kaufman Heis}, and provides no information when $\alpha$ is in the range $[m,m+1)$. In fact, Theorem \ref{main} provides the only known estimates for the size of the exceptional set of left cosets of horizontal $m$-dimensional subspaces, valid for $\alpha$ near $m$, which provides a genuine reduction in dimension below the Hausdorff dimension of the complementary vertical subspace. See Figure \ref{plot3}.

\begin{figure}[h]
\begin{center}
\input{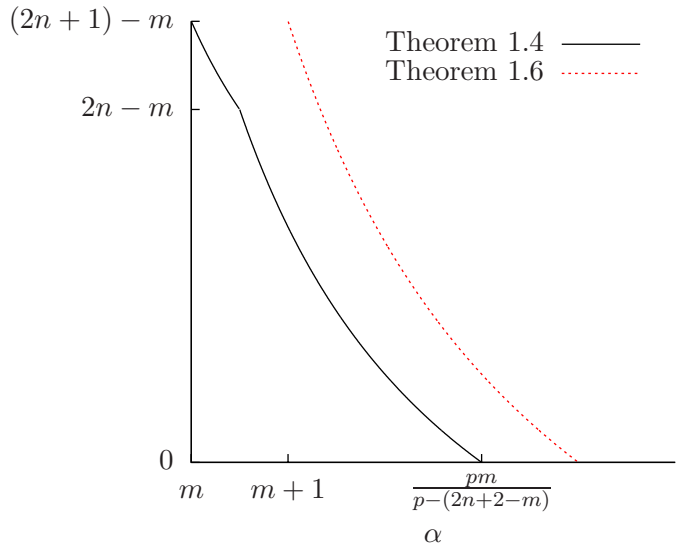}
\caption{The quantity $\beta(p,m,\alpha)$ of Theorem \ref{main} versus that of Theorem \ref{foliation intro}, shown here when $n=2$, $m=1$, and $p=6$.}\label{plot3}
\end{center}
\end{figure}

\

\paragraph{\bf Overview.}
In Section \ref{notation section}, we establish notation and conventions for the Heisenberg groups and their homogeneous subgroups.  Section \ref{discussion section} contains the proof of Theorem \ref{main}. Outlines of Gehring's and Mostow's proofs of the ACL property of quasiconformal mappings are included to clarify the structure of our argument. Section \ref{4 corners} contains the example asserted in Theorem \ref{four corners intro}. Section \ref{questions section} contains some questions left open by this paper.

\

\paragraph{\bf Acknowledgements.}
Research for this paper was carried out during a visit of JTT to the University of Bern in Summer 2012 and a visit of KW to the University of Illinois in February 2013. The hospitality of both institutions is appreciated.

\section{Notation and properties of the Heisenberg group}\label{notation section}
We employ standard notation for metric spaces. Given a metric space $(X,d)$, a point $x \in X$ and a radius $r>0$, we denote
$$B_X(x,r) = \{y \in X: d(x,y)<r\} \ \text{and}\ \ovl{B}_{X}(x,r) = \{y \in X: d(x,y)\leq r\}.$$
The open $r$-neighborhood of a set $A \subeq X$ is denoted
$$\nbhd_{X}(A,r) = \bigcup_{x \in A} B_{X}(x,r).$$
Where it will not cause confusion, we will suppress reference to the ambient space $(X,d)$, and a similar convention will hold for all quantities that depend implicitly on $(X,d)$.

We write $A\lesssim B$, resp.\ $A\gtrsim B$ to indicate that the inequality $A\le CB$, resp.\ $B\le CA$ holds, where $C$ is a constant depending only on suitable data (which will be indicated in practice or clear from context). We write $A\simeq B$ if $A\lesssim B$ and $B\lesssim A$.

\subsection{Basic properties and notation}
The Heisenberg group $\Heis^n$, $n \in \nats$, is the unique step two nilpotent stratified Lie group with topological dimension $2n+1$ and one dimensional center. We denote $\Heis^1=\Heis$.  As a set, we identify $\Heis^n$ with $\reals^{2n+1}$ equipped with coordinate system $(x_1,y_1,\hdots,x_n,y_n,t)$, which we also denote by $(z,t)$.
Given points $a=(z,t)$ and $a_0=(z',t')$, the group law on $\Heis^n$ is defined by
$$ a*{a'} =(z+z',t+t'+ 2\omega(z,z'))$$
where $\omega(z,z') = \sum_{i=1}^n (x_iy'_i - x'_iy_i)$ is the standard symplectic form on $\reals^{2n}$. The group $\Heis^n$ is equipped with a left-invariant metric $d_{\Heis^n}(a,a') = ||a^{-1}*a'||_{\Heis^n}$ via the \emph{Kor\'anyi norm}
$$
||a||_{\Heis^n} =( ||z||_{\reals^{2n}}^4 + |t|^2)^{1/4}.
$$
The metric space $(\Heis^n,d_{\Heis^n})$ is proper and Ahlfors $(2n+2)$-regular when equipped with its Haar measure (which agrees up to constants with both the Lebesgue measure in the underlying Euclidean space $\R^{2n+1}$ and the $(2n+2)$-dimensional Hausdorff measure $\cH^{2n+2}_{\Heis^n}$ in the Kor\'anyi metric $d_{\Heis^n}$). It is known that the metric measure space $(\Heis^n,d_{\Heis^n},\cH^{2n+2}_{\Heis^n})$ supports a $p$-Poincar\'e inequality for every $1\le p<\infty$; see \cite[Chapter 11]{SobMet} and the references therein.

The Heisenberg group $\Heis^n$ admits a one-parameter family of \emph{intrinsic dilations} $\{\del_r \colon \Heis^n \to \Heis^n\}_{r>0}$ defined, for a point $a=(z,t) \in \Heis^n$, by
$$\del_r(a)=(rz,r^2t).$$ These dilations commute with the group law and are homogeneous of order one with respect to the Kor\'anyi norm, i.e.,
$$\del_r(a)*\del_r(a')=\del_r(a*a') \ \text{and}\ ||\del_r(a)||_{\Heis^n} = r ||a||_{\Heis^n}.$$

\subsection{Homogeneous subgroups of $\Heis^n$}
A subgroup of $\Heis^n$ is \emph{homogeneous} if it is invariant under intrinsic dilations. Homogeneous subgroups come in two types. A homogeneous subgroup is called \emph{horizontal} if it is of the form $V \times\{0\}$ for an isotropic subspace $V$ of the symplectic space $\reals^{2n}$; recall that $V$ is {\it isotropic} if $\omega|_V=0$. Every homogeneous subgroup that is not horizontal contains the $t$-axis; these subgroups are called \emph{vertical}. Any horizontal subgroup $\V = V \times \{0\}$ defines a semidirect decomposition $\Heis^n = \V^\perp \ltimes \V$ where $\V^\perp = V^\perp \times \reals$ is the \emph{vertical complement} of $\V$; here $V^\perp$ denotes the usual orthogonal complement of $V$ in $\reals^{2n}$.

Since $\omega$ vanishes on isotropic subgroups, the restriction of the Kor\'anyi metric to horizontal subgroups coincides with the Euclidean metric. Consequently,
$$
\dim_{\Heis^n} \V = \dim_{\R^{2n+1}} \V = \dim V
$$
for each horizontal homogeneous subgroup; we write $\dim \V$ without any subscript in this case.  In the remainder of the paper, we will be working with a fixed horizontal homogeneous subgroup $\V$, and so \textbf{unless otherwise noted, we will denote $\dim \V$ by the letter $m$.}

On the other hand, the Heisenberg metric on the vertical complement $\V^\perp$ differs drastically from the Euclidean metric on $(V^\perp \times \reals) \subeq \reals^{2n+1}$; recall that the $t$-axis has Hausdorff dimension $2$ in the Heisenberg metric.  Since the integer $n \geq 1$ will be fixed throughout, given a subset $A \subeq \V^\perp$, we write
$$\dim_{\Heis} A := \dim_{\Heis^n}A, \ \text{and}\  \dim_{\reals} A := \dim_{\reals^{2n+1}} A.$$
A similar convention will be used for all notions that depend on the choice of Euclidean or Heisenberg metric. \textbf{Unless otherwise noted, we will denote $\dim_{\reals}\V^\perp$ by $w$}, so that
$$\dim_{\reals} \V^\perp = w=(2n+1)-m, \ \text{and}\ \dim_{\Heis} \V^\perp = w+1 = (2n+2)-m.$$
As mentioned in the introduction, the semidirect product decomposition $\Heis^n = \V^\perp \ltimes \V$ defines maps
\begin{align*} \pi_{\V} \colon  \Heis^n \to \V \ \text{and}\ \pi_{{\V}^\perp} \colon \Heis^n \to {\V}^\perp \end{align*}
by the formulas $\pi_{\V}(a) = a_{\V}$ and $\pi_{\V^\perp}(a) = a_{\V^\perp}$, where $a =a_{\V^\perp} * a_{\V}$.  The map $\pi_{{\V}^\perp}$ is not Lipschitz on compact sets when $\V^\perp$ is equipped with the Heisenberg metric, but it is Lipschitz on compact sets when $\V^\perp$ is equipped with the Euclidean metric inherited from $\reals^{2n+1}$. The proof of this fact and further information about the metric and measure-theoretic properties of these projection maps can be found in \cite{balogh-faessler-mattila-tyson2012} and \cite[Section~6.3]{DimDistMetric}.

 \subsection{Sobolev mappings on $\Heis^n$}
All results stated in this paper are given for globally defined mappings only for convenience. The methods in use are local in nature and pass without difficulty to mappings defined on open subsets of the ambient space.

We now discuss the definition of a continuous Sobolev mapping defined on $\Heis^n$ and taking values in a metric space $Y$.  A standard definition of a $p$-Sobolev mapping, $1\leq p \leq \infty$, has two requirements: the mapping itself should be $p$-integrable, and the norm of the weak differential of the mapping should be $p$-integrable. As we consider only continuous mappings and our results are local in nature, only the second requirement is relevant.

There are many ways to define the class of continuous $p$-Sobolev mappings between metric spaces. We adopt the simplest and say that a continuous mapping $f \colon \Heis \to Y$ is in the \emph{local Sobolev class $\W^{1,p}_{\loc}(\Heis;Y)$} if there is an \emph{upper gradient} $g \colon \Heis^n \to [0,\infty]$ that is locally $p$-integrable with respect to $\cH^{2n+2}_\Heis$, i.e., a Borel function $g \in \Lloc(\Heis^n,\cH^{2n+2}_\Heis)$ such that for every rectifiable path $\gamma \colon [0,1] \to \Heis^n$,
$$d_Y(f(\gamma(0)),f(\gamma(1))) \leq \int_{\gamma} g \ ds,$$
where $ds$ refers to integration with respect to arclength. The upper gradient approach, developed by Cheeger \cite{Cheeger}, Heinonen and Koskela \cite{Acta}, and Shanmugalingam \cite{Nages}, is suitable in the general setting of doubling metric measure spaces that support a Poincar\'e inequality. For a thorough discussion of this and other possible approaches to the class of Sobolev mappings between metric spaces, see \cite{HKST}.

Aside from the definition, we shall only need one property of Sobolev mappings on $\Heis^n$. Namely, if $f \colon \Heis^n \to Y$ is a continuous mapping in the Sobolev class $\W^{1,p}_{\loc}(\Heis^n;Y)$ with $p>2n+2$, then \emph{Morrey's estimate} holds: there is a constant $c \geq 1$ and a dilation factor $\sigma \geq 1$, both depending only on $n$ and $p$, such that for any Heisenberg ball $B \subeq \Heis^n$,
\begin{equation}\label{Morrey}\diam f(B) \leq c(n,p)(\diam B)^{1-\frac{2n+2}{p}} \left(\int_{\sigma B} g^p \ d\cH^{2n+2}_\Heis\right)^{1/p}.\end{equation}
For a proof of \eqref{Morrey}, see \cite{SobMet} and \cite{HKST}. We note that Morrey's estimate in fact implies continuity; our a priori assumption of continuity is for convenience only.

\section{The proof of Theorem \ref{main}}\label{discussion section}


The kernel of the proof of Theorem \ref{main} can be traced back to Gehring's proof of the ACL property of metrically defined quasiconformal mappings \cite[Lemma~9]{rings}.  This argument was at first incorrectly applied to the Heisenberg setting by Mostow \cite{MostowBook}, who mistakenly asserted that the vertical projection map $\pi_{{\V}^\perp}$ is Lipschitz in the Heisenberg metric. This error was ingeniously overcome by Mostow \cite{MostowRemark}, and a simple presentation of the correct proof can be found in \cite{KR}.

Roughly speaking, adapting Gehring's original proof to the setting of Theorem \ref{main} will provide the claimed value of $\beta(p,m,\alpha)$ when $\alpha$ is large, and adapting Mostow's correct proof will provide the claimed value when $\alpha$ is small. There are significant obstacles to adapting Mostow's proof methods to the question of frequency of dimension distortion, as discussed in Section \ref{discussion section} below. The main tool in overcoming these obstacles is the slicing and projection machinery developed by Mattila \cite{mat:projections}, \cite{Mattila}.

\subsection{Gehring's method}\label{Gehring}
To motivate and organize our proof, we first give a brief outline of the proof of the fact that a quasiconformal homeomorphism  $f \colon \reals^3 \to \reals^3$ has the ACL property.

Using the coordinate system $(x,y,t)$ for $\reals^3$, for the moment we denote the $x$-axis by $V_x$, and set $\pi\colon \reals^3 \to V_x^\perp$ to be the standard Euclidean orthogonal projection.

We will show that for a closed ball $K$ containing the origin,
\begin{equation}\label{post comp line} \cH^{2}_{\reals^3}\left(\{a \in V_x^\perp \cap K: \cH^1_{\reals^3}(f(a+{V}_x))=\infty\}\right)=0.\end{equation}
This is not quite enough to show the ACL property, but provides sufficient intuition for our purposes.

We outline the three key steps of the proof that \eqref{post comp line} holds when $f$ is quasiconformal.

\begin{itemize}
\item[(i)]  Given $a=V_x^\perp \cap K$, if
$$
\liminf_{r \to 0}  \frac{\cH^3_{\reals^3}(f(\nbhd_{\reals^3}(a + {V}_x,r) \cap K))}{r^{2}}<\infty,$$
then $\cH^{1}_{\reals^3}(f((a+{V}_x) \cap K)) <\infty.$ This relationship between Minkowski content and Hausdorff measure  can be seen in this quasiconformal case by using the standard distortion estimate
$$\diam f(B) \simeq \left(\cH^{3}_{\reals^3}(f(B))\right)^{\frac{1}{3}},$$
which holds for any ball $B \subeq \reals^3$, along with a covering argument and H\"older's inequality.

\item[(ii)] Define a Radon measure $m$ on $V_x^\perp \cap K$ so that for each $a \in V_x^\perp \cap K$,
$$m(B_{V_x^\perp}(a,r)  \cap K) = \cH^3_{\reals^3}(f(\pi\inv(B_{V_x^\perp}(a,r)) \cap K)).$$
\item[(iii)] By the Radon-Nikodym Theorem, the derivative of this measure with respect to two-dimensional Hausdorff measure on $V_x^\perp \cap K$ exists and is finite $\cH^2_{V_x^\perp \cap K}$-almost everywhere. Thus
$$\lim_{r \to 0} \frac{ \cH^3_{\reals^3}(f(\pi\inv(B_{V_x^\perp}(a,r)) \cap K)))}{r^2} < \infty$$
for $\cH^2_{V_x^\perp \cap K}$-almost every point $a \in V_x^\perp \cap K$. Applying Step (i) now completes the proof, as in this setting we have
\begin{equation}\label{tube is preimage} \pi\inv(B_{V_x^\perp}(a,r)) = \nbhd_{\reals^3}(a + {V}_x,r).\end{equation}
\end{itemize}

\subsection{Adapting Gehring's method to the Heisenberg groups}\label{adapt Gehring}
We remind the reader of our convention that for a fixed horizontal subgroup $\V$, we denote
$$m:=\dim \V  = \dim_\reals \V = \dim_\Heis \V \ \text{and}\  w:=\dim_{\reals}\V^\perp=\dim_{\Heis} \V^\perp-1.$$
The goal of this section is to prove the following statement, which in particular gives the desired estimate in Theorem \ref{main} when $$\alpha \in \left[\frac{pm}{p-2},\frac{pm}{p-(w+1)}\right].$$

\begin{proposition}\label{big alpha}Let $Y$ be an arbitrary metric space and let $f \colon \Heis^n \to Y$ be a mapping in the Sobolev space $\W^{1,p}_{\loc}(\Heis;Y)$ for some $p > 2n+2$.  Given a horizontal subgroup $\V$ of $\Heis^n$ of dimension $1 \leq m \leq n$, and
$$\alpha \in \left[m,\frac{pm}{p-(w+1)}\right],$$
it holds that
$$\dim_{\reals}\{a \in \V^\perp:  \dim f(a*\V) \geq \alpha\} \leq (w+1)-p\left(1-\frac{m}{\alpha}\right).$$
\end{proposition}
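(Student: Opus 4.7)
The plan is to execute the three-step scheme of Section \ref{Gehring} in the Heisenberg setting, with three substitutions: the quasiconformal relation between diameter and volume is replaced by Morrey's estimate \eqref{Morrey}; the linear Euclidean projection is replaced by $\pi_{\V^\perp}$; and the identity \eqref{tube is preimage}, which fails in the Heisenberg group, is replaced by a one-sided inclusion that exploits the Lipschitz property of $\pi_{\V^\perp}$ as a map into the \emph{Euclidean} space $\V^\perp \subeq \reals^{2n+1}$.

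For the analog of step (i), I fix a compact set $K \subeq \Heis^n$, take an upper gradient $g \in \Lloc^p(\Heis^n)$ of $f$, and set $\mu_f := g^p \cH^{2n+2}_\Heis$. For each $a \in \V^\perp$ and each small $r > 0$, a Vitali-type argument lets me cover $(a*\V) \cap K$ by Heisenberg balls $B_i = B_\Heis(x_i,r)$ with $x_i \in a*\V$ such that the Morrey dilates $\sigma B_i$ have bounded overlap. Since $a*\V$ is Euclidean-isometric to the $m$-dimensional subspace $V$, only $N \lesssim r^{-m}$ balls are required. Applying Morrey's estimate \eqref{Morrey} to each $B_i$ and then H\"older's inequality with the exponent pair $(p/\alpha, p/(p-\alpha))$ (valid since $\alpha \leq p$ throughout the range in question) and the bounded overlap yield
\[
\sum_i \bigl(\diam f(B_i)\bigr)^\alpha \;\lesssim\; r^{(\alpha - m) - \frac{\alpha(w+1)}{p}} \; \mu_f\bigl(\nbhd_\Heis(a*\V, \sigma r) \cap K'\bigr)^{\alpha/p},
\]
where $K' \supeq K$ is a slight thickening. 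Setting $\gamma := (w+1) - p(1 - m/\alpha)$, a short check shows that if $\mu_f(\nbhd_\Heis(a*\V, \sigma r) \cap K') = O(r^\gamma)$ along a sequence $r \to 0$, then the right-hand side stays bounded, so $\cH^\alpha(f((a*\V) \cap K)) < \infty$, and hence $\dim f((a*\V) \cap K) \leq \alpha$.

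For steps (ii) and (iii), I will push forward $\mu_f|_{K'}$ through $\pi_{\V^\perp}$ to obtain a finite Radon measure $\nu$ on the Euclidean space $(\V^\perp, d_{\text{eucl}})$ defined by $\nu(A) := \mu_f(\pi_{\V^\perp}^{-1}(A) \cap K')$. The key use of the Euclidean-Lipschitzness of $\pi_{\V^\perp}|_{K'}$, with some constant $L$, is the inclusion
\[
\nbhd_\Heis(a*\V, \sigma r) \cap K' \;\subeq\; \pi_{\V^\perp}^{-1}\bigl(B_{\V^\perp}^{\reals}(a, L\sigma r)\bigr),
\]
which converts $\mu_f$-mass of a Heisenberg tube around a leaf into $\nu$-mass of a Euclidean ball on the parameter space. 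Since $(\V^\perp, d_{\text{eucl}})$ embeds in a Euclidean space, the standard upper-density theorem for Radon measures (mass distribution principle) yields $\limsup_{r \to 0} \nu(B_{\V^\perp}^{\reals}(a,r))/r^\gamma < \infty$ for $\cH^\gamma_\reals$-a.e.\ $a \in \V^\perp$. Combining this with the previous paragraph gives $\cH^\gamma_\reals(\{a : \dim f((a*\V) \cap K) > \alpha\}) = 0$. Exhausting $\Heis^n$ by a countable family of compact sets, and then writing $\{a : \dim f(a*\V) \geq \alpha\} \subeq \{a : \dim f(a*\V) > \alpha_k\}$ for a sequence $\alpha_k \nearrow \alpha$, yields the claimed bound.

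The main obstacle, and the reason this argument departs from the Euclidean proof of Theorem \ref{BMT theorem}, is the failure of the identity \eqref{tube is preimage} in the Heisenberg group: the projection $\pi_{\V^\perp}$ is not Heisenberg-Lipschitz, so the preimage of a small Heisenberg ball in $\V^\perp$ is not a tube around the corresponding leaf. The resolution is the one-sided Euclidean inclusion above, which comes at the cost of measuring the exceptional set by the Euclidean (rather than Heisenberg) metric on $\V^\perp$. This same asymmetry between the two metrics on $\V^\perp$ is also the reason Proposition \ref{big alpha} is suboptimal for small $\alpha$, a deficiency that will be repaired by the Mostow-style slicing argument in the next subsection.
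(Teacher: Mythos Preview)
Your proposal is correct and follows essentially the same three-step scheme as the paper's proof: Proposition~\ref{minkowski} for step~(i), the push-forward measure on $\V^\perp$ for step~(ii), and a differentiation argument for step~(iii), with the same crucial one-sided inclusion $\nbhd_\Heis(a*\V,r)\cap K' \subeq \pi_{\V^\perp}^{-1}(B_W(a,Lr))$ coming from the Euclidean-Lipschitzness of $\pi_{\V^\perp}$.

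The only noteworthy difference is in step~(iii): the paper argues by contradiction, using Frostman's lemma to produce a measure $m$ supported on the exceptional set and then applying Radon--Nikodym differentiation of $\Phi$ with respect to $m$, whereas you invoke the upper density theorem for Radon measures directly (the statement that $\limsup_{r\to 0}\nu(B(a,r))/r^\gamma<\infty$ for $\cH^\gamma$-a.e.\ $a$; see \cite[Theorem~6.9]{Mattila}). These are equivalent packagings of the same idea, and your route is arguably a bit more direct. One terminological quibble: the phrase ``mass distribution principle'' normally refers to the converse direction (a Frostman-type lower bound on dimension); the result you are actually using is the upper $s$-density theorem.
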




For the remainder of this subsection, we assume the hypotheses of Proposition \ref{big alpha}.   In addition, we denote by $K$ the closure of an arbitrary bounded neighborhood of the origin in $\Heis^n$, and let $K'$ be the closure of a bounded neighborhood of the origin in $\Heis^n$ that contains $K$ in its interior.

As we wish to estimate the Euclidean dimension of a subset of $\V^\perp$, we will consider Euclidean balls in $\V^\perp$. For ease of notation, we define $W$ to be the metric space $(\V^\perp,d_{\reals^{2n+1}}),$ so that for $a \in \V^\perp$ and $r>0$
$$B_W(a,r) = \{a' \in \V^\perp: d_{\reals^{2n+1}}(a,a')<r\}.$$

The estimate in Proposition \ref{big alpha} is trivially true when $\alpha = m$, so we need only consider the case that $\alpha>m$. Define
$$E_\alpha = \{a \in \V^\perp: \cH^{\alpha}(f((a*\V) \cap K)) > 0 \}.$$
By basic properties of Hausdorff measure and dimension, it suffices to show
\begin{equation}\label{Eucl beta reduced}
\dim_{\reals} E_\alpha \leq (w + 1) - p\left(1-\frac{m}{\alpha}\right) \alpha. \end{equation}

By the countable additivity of Hausdorff measure, the following lemma allows us to assume without loss of generality that $E_\alpha$ is compact.

\begin{lemma}\label{union of compacts}
The set $E_\alpha$ is a countable union of compact sets.
\end{lemma}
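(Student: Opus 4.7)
The plan is to realize $E_\alpha$ as a countable increasing union of closed sets contained in a fixed compact subset of $\V^\perp$, the key tool being a semicontinuity property of Hausdorff content. The naive decomposition using $\cH^\alpha$ itself fails because $\cH^\alpha$ is not upper semicontinuous as a set function, so I pass to the Hausdorff $\alpha$-content $\cH^\alpha_\infty$. The elementary observation that any cover witnessing $\sum(\diam U_i)^\alpha \leq \epsilon$ must consist of sets of diameter at most $\epsilon^{1/\alpha}$ yields the equivalence $\cH^\alpha(A) > 0 \iff \cH^\alpha_\infty(A) > 0$, and hence
\begin{equation*}
E_\alpha = \bigcup_{N=1}^\infty E_\alpha^N, \qquad E_\alpha^N := \{a \in \V^\perp : \cH^\alpha_\infty(f((a*\V)\cap K)) \geq 1/N\}.
\end{equation*}

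The heart of the argument will be to verify that each $E_\alpha^N$ is closed, which I will deduce from the upper semicontinuity of the set function $a \mapsto \cH^\alpha_\infty(f((a*\V) \cap K))$. I will proceed in two steps. First, the multifunction $a \mapsto (a*\V) \cap K$ is Kuratowski outer semicontinuous: given $a_n \to a$ in $\V^\perp$ and points $x_n \in (a_n * \V) \cap K$, compactness of $K$ produces a subsequential limit $x \in K$, and continuity of the group operations forces $a_n^{-1} * x_n \to a^{-1} * x \in \V$, placing $x$ in $(a*\V) \cap K$. Combined with continuity of $f$, this implies that $f((a_n*\V) \cap K)$ is eventually contained in every open neighborhood of the compact set $f((a*\V) \cap K)$. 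Second, Hausdorff content enjoys the outer semicontinuity property: if compact sets $A_n$ are eventually contained in every open neighborhood of a compact set $A$, then $\limsup_n \cH^\alpha_\infty(A_n) \leq \cH^\alpha_\infty(A)$. Indeed, any countable open cover $\{U_i\}$ of $A$ nearly achieving the infimum defining $\cH^\alpha_\infty(A)$ has open union containing a $\delta$-neighborhood of $A$ by compactness, and hence eventually contains $A_n$.

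It remains to note that $E_\alpha$ is bounded in $\V^\perp$: if $(a*\V) \cap K = \emptyset$ then $a \notin E_\alpha$, so $E_\alpha \subseteq \pi_{\V^\perp}(K)$, which is compact as the continuous image of a compact set. Consequently each $E_\alpha^N$ is a closed subset of a compact set, hence compact. The principal technical subtlety I anticipate is the reduction from $\cH^\alpha$ to $\cH^\alpha_\infty$: without it the outer-semicontinuity argument fails, because $\cH^\alpha$ (as the $\delta \to 0$ limit of $\cH^\alpha_\delta$) can jump upward under Hausdorff-type convergence of sets, whereas Hausdorff content is stable precisely because the covers realizing its infimum need not shrink in diameter.
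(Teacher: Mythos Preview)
Your proof is correct and follows essentially the same approach as the paper: both pass from $\cH^\alpha$ to the Hausdorff content $\cH^\alpha_\infty$ (exploiting that they share null sets), decompose $E_\alpha$ into the superlevel sets $\{a:\cH^\alpha_\infty(f((a*\V)\cap K))\ge 1/N\}$, and show each is closed via an upper-semicontinuity argument based on the fact that any open cover of the compact set $f((a*\V)\cap K)$ nearly realizing the content also covers the images of nearby fibers. Your observation that $E_\alpha\subseteq\pi_{\V^\perp}(K)$ gives boundedness directly, whereas the paper simply intersects each closed set with an exhaustion by bounded sets; this is a cosmetic difference only.
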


\begin{proof}
As closed and bounded sets in $\reals^{2n+1}$ are compact, it suffices to show that $E_\alpha$ is a countable union of closed sets, which may then be decomposed into countably many closed and bounded parts. Since the $\alpha$-dimensional Hausdorff measure and the $\alpha$-dimensional Hausdorff content $\cH^\alpha_\infty$ have the same null sets, it suffices to show that for each $n \in \nats$, the set
$$E_\alpha(n)= \left\{a \in \V^\perp: \cH^{\alpha}_{\infty}(f((a*\V )\cap K))\geq \frac{1}{n}\right\}$$
is closed. Let $\{a_j\}_{j\in \nats} \subeq E_\alpha(n)$ be a sequence converging to a point $a \in \V^\perp$.  Since $f$ and $\pi_{\V^\perp}$ are continuous, for every $\ep>0$, there is an index $j(\ep) \in \nats$ such that if $j \geq j(\ep)$, then
$$f((a_j*\V) \cap K))\subeq \nbhd_{Y}(f((a*\V) \cap K)), \ep).$$
If $a \notin E_\alpha(n)$, then there is a cover $\{B_Y(y_i,r_i)\}_{i \in \nats}$ of $f((a*\V)\cap K)$ by open balls such that
$$\sum_{i \in \nats} r_i^\alpha < \frac{1}{n}.$$
Since $f((a*\V) \cap K)$ is compact, we may find $\ep>0$ such that the neighborhood $\nbhd_Y(f((a*\V) \cap K),\ep)$ is also covered by $\{B_Y(y_i,r_i)\}_{i \in \nats}$. This implies that
$$\cH^{\alpha}_{\infty}(f((a_j*\V)\cap K)) < \frac{1}{n}$$
for all $j \geq j(\ep)$, which yields the desired contradiction.
\end{proof}

We now establish a version of Step (i) in Gehring's method, which provides a sufficient condition for the desired bound on the dimension of the image of a line segment under $f$. It is only in the proof of this statement that we use the Morrey estimate.

\begin{proposition}\label{minkowski} Let $m \leq \alpha \leq p$. If
$$\liminf_{r \to 0}  \frac{\int_{\nbhd_{\Heis}(a * \V,r) \cap K'} g^p \ d\cH^{2n+2}_{\Heis}}{r^{(w+1)-p(1-\frac{m}{\alpha})}}<\infty,$$
then $\cH^{\alpha}(f(((a*\V) \cap K)) <\infty.$
\end{proposition}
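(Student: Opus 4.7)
My plan is to prove Proposition \ref{minkowski} by converting the Morrey estimate into a Hausdorff-measure bound on $f((a*\V) \cap K)$, using the covering of a horizontal coset by small Heisenberg balls together with H\"older's inequality. Since $\V$ is horizontal, the coset $a * \V$ inherits the Euclidean metric from the Heisenberg metric (as noted in Section~\ref{notation section}), so $(a*\V) \cap K$ is an isometric copy of a bounded subset of $\reals^m$, and in particular for each $r > 0$ it can be covered by $N_r \lesssim r^{-m}$ Heisenberg balls $\{B_i\}_{i=1}^{N_r}$ of radius $r$ centered on $(a*\V) \cap K$, with the property that the $\sigma$-dilated balls $\{\sigma B_i\}$ have uniformly bounded overlap (with constants depending only on $n$, $\sigma$, and $K$). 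By taking $r$ small enough we may also assume $\bigcup_i \sigma B_i \subseteq \nbhd_\Heis(a*\V,\,\sigma r) \cap K'$, which uses that $K$ lies in the interior of $K'$.

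Applying Morrey's estimate \eqref{Morrey} to each $B_i$ and raising to the power $\alpha$ yields
\[
(\diam f(B_i))^\alpha \lesssim r^{\alpha\left(1 - \frac{2n+2}{p}\right)} \left(\int_{\sigma B_i} g^p \, d\cH^{2n+2}_\Heis\right)^{\alpha/p}.
\]
Summing over $i$ and applying H\"older's inequality with exponents $p/\alpha$ and $p/(p-\alpha)$ (which requires $\alpha \leq p$), together with the bounded overlap of $\{\sigma B_i\}$ and $N_r \lesssim r^{-m}$, gives
\[
\sum_{i=1}^{N_r} (\diam f(B_i))^\alpha \lesssim r^{\alpha\left(1-\frac{2n+2}{p}\right)} \, N_r^{1 - \alpha/p} \left(\int_{\nbhd_\Heis(a*\V,\sigma r) \cap K'} g^p \, d\cH^{2n+2}_\Heis\right)^{\alpha/p}.
\]
Collecting the powers of $r$ (using $w+1 = 2n+2-m$) produces an exponent of $(\alpha - m) - \frac{\alpha(w+1)}{p}$ from the first two factors, while the liminf hypothesis, evaluated along a sequence $r_j \to 0$, contributes $r_j^{\frac{\alpha(w+1)}{p} - (\alpha - m)}$ from the integral factor. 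These exponents cancel exactly, so that along the subsequence $r_j$ the sum $\sum_i (\diam f(B_i))^\alpha$ remains uniformly bounded.

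Since $\{f(B_i)\}$ is a cover of $f((a*\V) \cap K)$ by sets of diameter at most $C r_j^{1-(2n+2)/p} \to 0$, the bound on $\sum_i (\diam f(B_i))^\alpha$ gives $\cH^\alpha(f((a*\V) \cap K)) < \infty$, as desired. The main technical obstacle in carrying this out is arranging the cover $\{B_i\}$ so that the dilations $\sigma B_i$ have controlled overlap and are contained in the desired Heisenberg $r$-neighborhood of $a*\V$ intersected with $K'$; this is where the horizontality of $\V$ (which makes $(a*\V) \cap K$ Euclidean-flat of dimension $m$) and the fact that $K$ lies compactly inside $K'$ are both essential, and it is the one place in the argument that cannot be simply copied from the Euclidean template.
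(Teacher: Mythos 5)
Your argument is correct and follows essentially the same route as the paper's proof: cover $(a*\V)\cap K$ by $\lesssim r^{-m}$ Heisenberg balls centered on the coset, apply Morrey's estimate and H\"older's inequality with exponents $p/\alpha$ and $p/(p-\alpha)$, use the bounded overlap of the dilated balls, and observe that the resulting power of $r$ cancels exactly against the one supplied by the liminf hypothesis along a sequence $r_j \to 0$. Your choice of radius $r$ with neighborhood $\sigma r$ (versus the paper's radius $r/\sigma$ with neighborhood $r$) and your explicit passage to a subsequence are only cosmetic differences.
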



\begin{proof}[Proof of Proposition \ref{minkowski}]
Fix $\ep>0$. By the Morrey estimate \eqref{Morrey}, there is a constant $\sigma \geq 1$ such that for $q \in \Heis^n$ and $r>0$, the Heisenberg ball $B_\Heis(q,r)$ satisfies
\begin{equation}\label{MorreyUse}
\diam f(B_\Heis(q,r)) \lesssim r^{1-\frac{2n+2}{p}} \left(\int_{B_\Heis(q,\sigma r)}g^p \ d\cH^{2n+2}_\Heis \right)^{1/p}.
\end{equation}
By the uniform continuity of $f$ on compact sets, we may find $\ep'>0$ so that if $B$ is a Heisenberg ball that intersects $K$ and has radius no greater than $\ep'$, then  $B \subeq K'$ and $\diam f(B) < \ep$  Let $r < \ep'$. Since $a * \V \subeq \Heis^n$ is isometric to $\reals^m$ equipped with the Euclidean metric, we may find a cover of $(a* \V)\cap K$ by Heisenberg balls $B_1,\hdots,B_N$ of radius $r/\sigma$ centered on $a*\V$, where $N \leq M'r^{-m}$ and $M'$ depends only on $M$ and $\sigma$. We may moreover assume that there is a number $D \geq 1$, depending only on $\sigma$, such that no point of the Heisenberg group lies in more than $D$ of the dilated balls $\sigma B_1,\hdots,\sigma B_N$.  Denoting by $\cH^{\alpha,\ep}$ the $\alpha$-dimensional Hausdorff pre-measure calculated by considering coverings by sets of diameter no greater than $\ep>0$, we see that
$$\cH^{\alpha,\ep}(f((a*\V)\cap K)) \leq \sum_{i=1}^N (\diam{f(B_i)})^\alpha.$$
Hence, by \eqref{MorreyUse},  H\"older's inequality, the bounded overlap property of the cover $\{\sigma B_i\}_{i=1}^N$, and the estimate $N \leq M'r^{-m}$ yield
\begin{align*}
\cH^{\alpha,\ep}(f((a*\V)\cap K)) \leq & r^{(1-((2n+2)/p))\alpha} \sum_{i=1}^N \left(\int_{\sigma B_i}g^p \ d\cH^{2n+2}_\Heis\right)^{\alpha/p} \\
\lesssim & r^{(1-((2n+2)/p))\alpha} N^{1-(\alpha /p)} \left(\int_{\nbhd_\Heis(a*\V,r) \cap K'} g^p \ d\cH^{2n+2}_\Heis \right)^{\alpha /p} \\
\lesssim & r^{(\alpha-m)-(w+1)\alpha/p} \left(\int_{\nbhd_\Heis(a*\V,r) \cap K'} g^p \ d\cH^{2n+2}_\Heis \right)^{\alpha /p}.
\end{align*}
The hypothesis implies that there is a number $c>0$, depending on $a$, such that if $r$ is sufficiently small, then
$$
\left(\int_{\nbhd_\Heis(a*\V,r) \cap K'} g^p \ d\cH^{2n+2}_\Heis \right)^{\alpha /p} \leq cr^{(w+1)\alpha/ p-(\alpha-m)}.
$$
Thus there is a quantity $c'>0$, independent of $\ep$, such that $\cH^{\alpha,\ep}(f(a * \V)) \leq c'$. Letting $\ep$ tend to zero yields the desired result.
\end{proof}

Now, we establish a version of Step (ii) in Gehring's method. We define a measure $\Phi$ on $\V^\perp$, depending on $g$ and $K$, by the following Carath\'{e}odory construction. For $\ep>0$ and $E \subeq \V^\perp$, set
$$
\Phi_\ep(E)= \inf\left\{\sum_{i \in \nats}\int_{\pi_{\V^\perp}\inv(B_{W}(a_i,r_i))\cap K'}g^p \ d\cH^{2n+2}_{\Heis}: E \subeq \bigcup_{i \in \nats} B_W(a_i,r_i), \ 0<r_i < \ep\right\}.$$
Then set
$$\Phi(E) = \lim_{\ep\to 0}\Phi_\ep(E).$$
Since $g^p \in L^1(\Heis^n)$, the hypotheses of \cite[Theorem 4.2]{Mattila} apply and the set function $\Phi$ defines a Borel regular measure on $W$.  It follows from the sub-additivity of the integral that given $a \in \V^\perp$ and $r>0$,
$$\Phi(B_W(a,r))=\int_{\pi_{\V^\perp}\inv(B_W(a,r)) \cap K'}g^p \ d\cH^{2n+2}_\Heis \ .$$
Hence $\Phi$ is a Radon measure on $W$. Note that if $f$ is a smooth diffeomorphism of $\Heis^n$ to itself, and $g$ is the norm of the differential of $f$, then $\Phi(B_W(a,r))$ is comparable to the $(2n+2)$-dimensional Hausdorff measure of the image $f(\pi_{\V^\perp}\inv(B_W(a,r)) \cap K)$, in analogy to the quasiconformal setting.

We now give the analog of Step (iii) in Gehring's method, thereby completing the proof of Proposition \ref{big alpha}.

\begin{proof}[Proof of Proposition \ref{big alpha}]
As mentioned above, it suffices to prove the estimate \eqref{Eucl beta reduced}. Suppose, by way of contradiction, that there exists a number $t$ such that
$$(w+1)-p\left(1-\frac{m}{\alpha}\right)< t < \dim_{W}E_\alpha.$$
Then $\cH^t(E_\alpha)=\infty$. By Lemma \eqref{union of compacts} we may reduce to the case that $E_\alpha$ is compact, and then find a compact subset $E \subeq E_\alpha$ such that $0<\cH^t(E)<\infty$ \cite[Theorem~8.19]{Mattila}. By Frostman's lemma \cite[Theorem~8.17]{Mattila}, there exists a nonzero Radon measure $m$ supported on $E$ with the property that $m(B_W(a,r)) \leq r^t$ for all $a \in W$ and all sufficiently small $r>0$.

By applying the Radon-Nikodym theorem \cite[Theorem 2.12]{Mattila} to $\Psi$ and $m$, we see that for $m$-almost every point $a \in W$,
\begin{equation}\label{mRN}
\lim_{r \to 0} \frac{\int_{\pi_{\V^\perp}\inv(B_W(a,r)) \cap K'}g^p \ d\cH^{2n+2}_\Heis}{m(B_W(a,r))} < \infty.
\end{equation}
Since $\pi_{\V^\perp} \colon \Heis^n \to \V^\perp$ is Lipschitz on compact sets when $\V^\perp$ is equipped with the Euclidean metric, there is some $L \geq 1$ such that
$$
\nbhd_\Heis(a*\V,r) \cap K' \subeq \pi_{\V^\perp}\inv(B_W(a,Lr))
$$
for all $a \in \V^\perp$. This fact, the Frostman estimate on $m$, and \eqref{mRN} imply that for $m$-almost every $a \in W$,
$$
\lim_{r \to 0} \frac{\int_{\nbhd_{\Heis}(a*\V,r) \cap K'} g^p \ d\cH^{2n+2}_{\Heis}}{r^t}<\infty.
$$
Since $t>\beta$, we may find a number $\alpha' < \alpha$ such that
$$t = (w+1)-p\left(1-\frac{m}{\alpha'}\right).$$
As we have assumed $t < \dim_{\reals}E_\alpha$, and clearly $\dim_{\reals}E_\alpha<w+1$, we may also assume that $\alpha'>m$.  Proposition \ref{minkowski} now implies that for $m$-almost every point $a \in E$, it holds that $\cH^{\alpha'}(f((a*\V) \cap K)) <\infty$.  However, since the non-zero measure $m$ is supported on $E$, which is a subset of $E_\alpha$, and $\alpha' < \alpha$, this is a contradiction. \end{proof}

\begin{remark}\label{metric statement} The argument given in this section generalizes to the metric space setting without substantial changes. We record this in the following statement and refer to \cite{DimDistMetric} for the relevant definitions.

\begin{theorem}\label{metric theorem} Let $n$ be a positive integer. Assume that $(X,d,\mu)$ is a proper metric measure space that is locally $Q$-homogeneous and supports a local $Q$-Poincar\'e inequality, $Q \geq n$. Let $\pi \colon X \to \reals^n$ be a Lipschitz map such that for each point $a \in \reals^n$, the preimage $\pi\inv(a)$ is locally $s$-homogeneous, $0 \leq s < Q$. If $p>Q$ and $f \colon X \to Y$ is a continuous mapping into a metric space with an upper gradient in $\Lloc^p(X)$, then for each $\alpha \in (s, ps/(p-Q+s)$,
$$\dim \{a \in \reals^n : \cH^\alpha(f(\pi\inv(a)))>0\} \leq (Q-s)-p\left(1-\frac{s}{\alpha}\right).$$
\end{theorem}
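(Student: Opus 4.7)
The plan is to replicate the structure of the proof of Proposition \ref{big alpha}, observing that each ingredient of the Heisenberg argument has an abstract analog in the present setting once we understand the role played by $m$, $w+1$, and the isometric identification $\V \simeq \reals^m$. Fix a compact set $K \subseteq X$ and a slightly larger compact $K'$ containing $K$ in its interior, and set
$$E_\alpha = \{a \in \reals^n : \cH^{\alpha}(f(\pi\inv(a) \cap K)) > 0\}.$$
Exhausting $X$ by such pairs $K \subseteq K'$ and using countable subadditivity of Hausdorff measure, it suffices to bound $\dim E_\alpha$. Repeating the proof of Lemma \ref{union of compacts} verbatim (with $(a*\V)\cap K$ replaced by $\pi\inv(a)\cap K$, and using continuity of $\pi$ together with continuity of $f$), we see that $E_\alpha$ is a countable union of compact sets, so we may assume $E_\alpha$ itself is compact.

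The next step is the analog of Proposition \ref{minkowski}: if
$$\liminf_{r \to 0} \frac{\int_{\nbhd(\pi\inv(a),r)\cap K'} g^p \, d\mu}{r^{(Q-s) - p(1 - s/\alpha)}} < \infty,$$
then $\cH^{\alpha}(f(\pi\inv(a) \cap K)) < \infty$. The argument is the same covering computation: the local $s$-homogeneity of $\pi\inv(a)$ produces, for each sufficiently small $r>0$, a cover of $\pi\inv(a) \cap K$ by roughly $r^{-s}$ balls $B_1,\ldots,B_N$ in $X$ of radius $r/\sigma$, with the bounded-overlap property for the dilates $\sigma B_i$ built into the definition. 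The Morrey-type estimate for Sobolev mappings on locally $Q$-homogeneous metric measure spaces supporting a $Q$-Poincar\'e inequality, applicable since $p>Q$ (see \cite{HKST} or \cite{SobMet}), yields $\diam f(B_i) \lesssim r^{1 - Q/p} \bigl(\int_{\sigma B_i} g^p \, d\mu\bigr)^{1/p}$. H\"older's inequality, the bounded-overlap estimate on the $\sigma B_i$, and the bound $N \lesssim r^{-s}$ combine exactly as in the proof of Proposition \ref{minkowski} to give the claimed finiteness with the exponent $(Q-s) - p(1-s/\alpha)$.

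Finally I would construct a Radon measure $\Phi$ on $\reals^n$ by the Carath\'eodory construction of Section \ref{discussion section}, using
$$\Phi_\ep(E) = \inf\left\{\sum_{i} \int_{\pi\inv(B(a_i,r_i)) \cap K'} g^p \, d\mu \,:\, E \subseteq \bigcup_i B(a_i,r_i),\ 0 < r_i < \ep\right\},$$
and $\Phi = \lim_{\ep \to 0} \Phi_\ep$. Since $g^p \in \Lloc^1(X,\mu)$, the hypotheses of \cite[Theorem 4.2]{Mattila} apply, $\Phi$ is Borel regular, and subadditivity of the integral gives $\Phi(B(a,r)) = \int_{\pi\inv(B(a,r))\cap K'} g^p \, d\mu$, so $\Phi$ is Radon. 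Assume for contradiction that $\dim E_\alpha > t > (Q-s) - p(1-s/\alpha)$; extract a compact subset of positive finite $t$-Hausdorff measure and invoke Frostman's lemma to obtain a nonzero Radon measure $\nu$ supported there with $\nu(B(a,r)) \leq r^t$. The Radon--Nikodym theorem applied to $\Phi$ and $\nu$ gives $\limsup_{r \to 0} \Phi(B(a,r))/r^t < \infty$ for $\nu$-a.e.\ $a$. Because $\pi$ is Lipschitz with some constant $L$, $\nbhd(\pi\inv(a), r) \cap K' \subseteq \pi\inv(B(a, Lr))$, so this $\limsup$ bounds the Minkowski-type quantity in the previous paragraph at exponent $t$. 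Choosing $\alpha' \in (s,\alpha)$ with $t = (Q-s) - p(1-s/\alpha')$ (possible since $t$ lies strictly between $(Q-s) - p(1-s/\alpha)$ and $Q-s$), the Minkowski-content step yields $\cH^{\alpha'}(f(\pi\inv(a) \cap K)) < \infty$ for $\nu$-a.e.\ $a$, contradicting $\nu(E_\alpha) > 0$ and $\alpha' < \alpha$.

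The main obstacle is the Minkowski-content step: we must replace the Heisenberg-specific fact that $\V$ is isometric to $\reals^m$, which gave sharp, explicit covering counts, with the abstract local $s$-homogeneity hypothesis, and we must ensure the Morrey estimate \eqref{Morrey} holds with the exponent governed by $Q$ in our metric setting. Both are routine once the correct references are invoked, after which the Carath\'eodory/Radon--Nikodym/Frostman machinery from Section \ref{discussion section} transfers mechanically, since its only use of Heisenberg geometry was the Lipschitz property of the projection, which is precisely the hypothesis on $\pi$.
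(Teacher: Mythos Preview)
Your proposal is correct and follows precisely the approach the paper indicates: in Remark \ref{metric statement} the paper does not give an independent proof but simply asserts that ``the argument given in this section generalizes to the metric space setting without substantial changes,'' and you have faithfully carried out that generalization step by step (Lemma \ref{union of compacts}, the Minkowski-content estimate of Proposition \ref{minkowski} with $s$ and $Q$ in place of $m$ and $w+1$, the Carath\'eodory measure $\Phi$, and the Frostman/Radon--Nikodym contradiction).
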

We note that Theorem \ref{metric theorem} implies Theorem \ref{foliation intro} in the case that the parameterizing space of the David--Semmes foliation is Euclidean.
\end{remark}

\subsection{Mostow's method}
We now outline how Mostow adjusted Gehring's method to show that a quasiconformal homeomorphism $f \colon \Heis \to \Heis$ has the ACL property.

Using the coordinate system $(x,y,t)$ for $\Heis$, for the moment we denote the $x$-axis by $\V_x$,  set $W_x$ to be the metric space $(\V_x^\perp, d_{\reals^2})$, which is isometric to the Euclidean plane, and denote by $\pi_{\V_x^\perp} \colon \V_x \to W_x$ the Heisenberg projection mapping defined by splitting $\Heis = \V_x^\perp \ltimes \V_x$; we emphasize here that the target $W_x$ is equipped with the Euclidean metric.

As null sets for $\cH^2_{\reals}\restrict W_x$ coincide with null sets for $\cH^3_{\Heis} \restrict \V_x^\perp$, we will show that for a closed ball $K$ in $\Heis$ containing the origin,
\begin{equation}\label{post comp line 2}
\cH^{2}_{\reals}\left(\{a \in W_x \cap K: \cH^1_{\Heis}(f(a*\V_x))=\infty\}\right)=0.\end{equation}
Again, this is not quite enough to show the ACL property, but provides sufficient intuition for our purposes.

We now describe three steps in the proof that $f$ has the ACL property.
\begin{itemize}
\item [$(\til{\rm{i}})$] The first step of Mostow's method is basically same as step (i) of Gehring's method, and Proposition~\ref{minkowski} has already accomplished its analog in the general setting of Theorem \ref{main}. Given $a\in W_x \cap K$, if
\begin{equation}\label{Heis Minkowski}
\liminf_{r \to 0}  \frac{\cH^4_{\Heis}(f(\nbhd_\Heis(a*{\V}_x,r) \cap K))}{r^{3}}<\infty,
\end{equation}
then $\cH^{1}(f((a*{\V}_x) \cap K)) <\infty.$ As before, this relationship between Minkowski content and Hausdorff measure can be seen in this quasiconformal case by using the standard distortion estimate
$$\diam f(B) \simeq \left(\cH^{4}_{\Heis}(f(B))\right)^{\frac{1}{4}},$$
which holds for any ball $B \subeq \Heis$, along with a covering argument and H\"older's inequality.

\item[$(\til{\rm{ii}})$] We now diverge from Gehring's method, as the denominator appearing in \eqref{Heis Minkowski} is $r^3$, and not $r^2$. We produce a measure, not on $W_x$ as in Gehring's method, but instead on the $y$-axis $W_{x,t} = \{(0,y,0):y \in \R\}$ inside of $W_x$. Let $\pi_{W_{x,t}} \colon W_x \to W_{x,t}$ denote the standard Euclidean orthogonal projection. We define a measure $m$  so that for each $y_0 \in W_{x,t}$
$$m(B_{W_{x,t}}(y_0,r) \cap K) = \cH^4_{\Heis}(f \circ \pi_{\V_x^\perp}\inv \circ \pi_{W_{x,t}}\inv(B_{W_{x,t}}(y_0,r) \cap K)).$$

\item[$(\til{\rm{iii}})$] As in Gehring's method, we may apply the Radon-Nikodym theorem to the measure $m$, but with respect to linear measure on $W_{x,t}$. Thus, for $\cH^1$-almost every $y_0$ in $W_{x,t}$,
\begin{equation}\label{measure Heis qc}\lim_{r \to 0} \frac{\cH^4_{\Heis}(f \circ \pi_{\V_x^\perp}\inv \circ \pi_{W_{x,t}}\inv(B_{W_{x,t}}(y_0,r) \cap K))}{r} < \infty.\end{equation}
We now claim that if \eqref{Heis Minkowski} fails to hold for a set $\mathcal{C} \subeq W_x \cap K$ of positive $\cH^2$-measure, then \eqref{measure Heis qc} will fail on $\pi_{W_{x,t}}(\mathcal{C})$, which has positive $\cH^1$-measure by Fubini's theorem, yielding a contradiction.
The key point in the proof of this claim is the relationship between the sets
$$\pi_{\V_x^\perp}\inv \circ \pi_{W_{x,t}}\inv(B_{W_{x,t}}(y_0,r) \ \text{and}\ \nbhd_{\Heis}((a*{\V}_x),r).$$
This relationship is clarified by a geometric statement: given $y_0 \in W_{x,t}$, points $a=(0,y_0,t)$ and $a'=(0,y_0,t') \in \pi_{W_{x,t}}\inv(y_0)$, and $r>0$, then
$$\nbhd_\Heis(a*{\V}_x,r) \cap \nbhd_\Heis(a_0*{\V}_x,r)$$
implies that
$$|t-t'| \lesssim r^2.$$
The claim now follows by a packing argument, completing the proof.
\end{itemize}

\subsection{Adapting Mostow's method to the Heisenberg groups}\label{small section}

We resume the notation of Section \ref{big alpha}, but now additionally assume that $\alpha \in \left(m,mp/(p-2)\right)$. We will show that
$$
\dim_{\reals} \{a \in \V^\perp: \cH^{\alpha}(f(a*\V \cap K))>0\} \leq w-\frac{p}{2}\left(1-\frac{m}{\alpha}\right).
$$
It suffices to show that for $m<\alpha'<\alpha$,
\begin{equation}\label{final goal} \dim_{\reals}\{a \in \V^\perp: \cH^{\alpha'}(f((a*\V) \cap K))=\infty\} \leq w-\frac{p}{2}\left(1-\frac{m}{\alpha'}\right).\end{equation}

As mentioned above, the analog of Step ($\til{i}$) in the preceding outline is accomplished in Proposition~\ref{minkowski}. Creating a measure as in Step ($\til{ii}$) is complicated by the following issue that arises in application of Fubini's theorem in Step ($\til{iii}$). For simplicity, we explain the complication only in the case $n=m=1$. The right hand side of \eqref{final goal} is less than $2$, and the projection $\pi_{W_{x,t}}$ maps some sets of dimension less than $2$ onto sets of zero $\cH^1_{W_{x,t}}$-measure, so no simple application of Fubini's theorem will suffice.  We overcome this problem by applying the projection and slicing machinery of Mattila \cite{Mattila} to conclude that for almost every co-dimension $1$ subspace of $W_x$, the corresponding projection of a set of dimension $t > 1$ has positive $\cH_{W_x}^{t-1}$-measure.

The following proposition, combined with Proposition \ref{minkowski}, quickly implies \eqref{final goal} and hence completes the proof of Theorem \ref{main}. Its proof, given Lemma \ref{tubes to slabs}, implements the strategy given in the previous paragraph.

\begin{proposition}\label{find big tubes} Suppose that $m < \alpha < mp/(p-2)$. Set
$$
\mathcal{C}_\alpha = \left\{a \in \V^\perp \cap K : \liminf_{r \to 0}  \frac{\int_{\nbhd_\Heis(a*\V,r) \cap K'} g^p \ d\cH^{2n+2}_\Heis}{r^{(w+1)-p(1-m/\alpha)}} =\infty\right\}.$$
Then
$$\dim_{\reals} \mathcal{C}_\alpha \leq w-\frac{p}{2}\left(1-\frac{m}{\alpha}\right).$$
\end{proposition}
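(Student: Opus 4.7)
I would adapt Mostow's strategy $(\til{\rm{i}})$--$(\til{\rm{iii}})$ from the previous subsection, substituting Mattila's slicing/projection machinery for the naive Fubini step that fails when the target bound $\beta := w - \tfrac{p}{2}(1-m/\alpha)$ lies below $w = \dim_\reals \V^\perp$. Arguing by contradiction, I would assume $\dim_\reals \mathcal{C}_\alpha > \beta$ and note that the hypothesis $\alpha < mp/(p-2)$ gives $\tfrac{p}{2}(1-m/\alpha) < 1$, hence $\beta > w-1$. Choosing $\sigma \in (\beta, \dim_\reals \mathcal{C}_\alpha)$ and proceeding as in Lemma~\ref{union of compacts} and the proof of Proposition~\ref{big alpha}, I would reduce $\mathcal{C}_\alpha$ to a compact subset $E$ with $0 < \cH^\sigma_\reals(E) < \infty$ carrying a Frostman measure $\mu$ satisfying $\mu(B_\reals(a,r)) \leq r^\sigma$.

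Next I would build a Radon measure $\Psi$ on the Euclidean subspace $V^\perp$ (of Euclidean dimension $w-1$) via the Carath\'eodory construction of Section~\ref{adapt Gehring}, using the composition $\pi_{V^\perp} \circ \pi_{\V^\perp} : \Heis^n \to V^\perp$ in place of $\pi_{\V^\perp}$. The sub-additivity argument already given in Section~\ref{adapt Gehring} yields
\[
\Psi(B_{V^\perp}(v, r)) = \int_{(\pi_{V^\perp} \circ \pi_{\V^\perp})^{-1}(B_{V^\perp}(v, r)) \cap K'} g^p \, d\cH^{2n+2}_\Heis,
\]
and Radon--Nikodym against $\cH^{w-1}$ on $V^\perp$ delivers $\limsup_{r \to 0} \Psi(B_{V^\perp}(v, r))/r^{w-1} < \infty$ for $\cH^{w-1}$-a.e.\ $v \in V^\perp$.

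The core new input will be a slicing step: since $\sigma > w-1$, I would apply Mattila's slicing/projection machinery --- with averaging over auxiliary directions within $V^\perp$ to compensate for the distinguished role of the $t$-axis --- to produce a set $G \subseteq V^\perp$ of positive $\cH^{w-1}$-measure such that for every $v \in G$ the $t$-fiber $E_v := E \cap \pi_{V^\perp}^{-1}(v)$ carries a non-zero Frostman measure $\mu_v$ with $\mu_v(B(\tau,\rho)) \leq \rho^s$, where $s := \sigma - (w-1) > 1 - \tfrac{p}{2}(1-m/\alpha)$. I will then invoke Lemma~\ref{tubes to slabs}, which should encode Mostow's $r^2$-spacing observation: there exists $L \geq 1$ such that any two cosets $a*\V, a'*\V$ with $a, a' \in \V^\perp$ differing only in the $t$-coordinate by more than $Lr^2$ have disjoint Heisenberg $r$-neighborhoods, each contained in the slab $(\pi_{V^\perp}\circ\pi_{\V^\perp})^{-1}(B_{V^\perp}(v, Lr))$.

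To close the argument, for each $v \in G$ I would select a maximal $(Lr^2)$-separated subfamily $\{a_i\} \subseteq E_v$, obtaining $N \gtrsim r^{-2s}$ from Frostman on $\mu_v$. A monotone convergence argument, applied to the increasing-in-$r$ sets on which the $\liminf = \infty$ condition of $\mathcal{C}_\alpha$ already yields a clean lower bound at scale $r$, allows me to arrange that a definite $\mu_v$-fraction of the $a_i$ satisfies $\int_{\nbhd_\Heis(a_i*\V, r) \cap K'} g^p \geq r^\gamma$, where $\gamma := (w+1) - p(1-m/\alpha)$. The disjointness and slab containment then give
\[
\Psi(B_{V^\perp}(v, Lr)) \;\geq\; \sum_i \int_{\nbhd_\Heis(a_i*\V, r) \cap K'} g^p \;\gtrsim\; r^{\gamma - 2s},
\]
and a direct computation yields $\gamma - 2s - (w-1) = 2(\beta - \sigma) < 0$, so $\Psi(B_{V^\perp}(v, Lr))/r^{w-1} \to \infty$ for every $v \in G$, contradicting Radon--Nikodym. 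The principal obstacle I foresee is the slicing step: Mattila's theorems guarantee positive-measure fibers only for \emph{generic} projection directions, whereas the $r^2$-spacing of Lemma~\ref{tubes to slabs} is tied to the specific $t$-direction. I expect this is to be circumvented by projecting along a generic line in $\V^\perp$ uniformly transverse to the $t$-axis, since the packing constant in Lemma~\ref{tubes to slabs} stays bounded in that regime and the $r^2$-gain persists.
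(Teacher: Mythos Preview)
Your plan is correct and matches the paper's argument: the paper proves the proposition by assuming $\dim_\reals \mathcal{C}_\alpha > \beta > w-1$, first passing (via your monotone-convergence step, but done \emph{globally} before slicing) to a subset $\mathcal{C}$ on which the tube lower bound holds uniformly in $r$, and then combining Mattila's projection/slicing theorems with the $r^2$-separation geometry and a Radon--Nikodym bound on the codimension-one subspace to reach a contradiction via exactly the packing computation you wrote down. You have also correctly anticipated and resolved the principal obstacle: the paper does not project onto the fixed $V^\perp$ but onto $\Theta^\perp$ for a generic $\theta$ in a small neighborhood $S\subset\mathbb{S}^{w-1}$ of the $t$-direction, proving (this is Lemma~\ref{disjoint tubes}; what you call Lemma~\ref{tubes to slabs} is in fact this geometric lemma) that the $r^2$-spacing persists throughout $S$, so one may choose $\theta$ simultaneously in $S$ and in the Mattila-generic set.
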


Before proving Proposition \ref{find big tubes}, we establish some more notation related to the strategy outlined above. Denote
$$\mathbb{S}^{w-1}=\{\theta \in \V^\perp: ||\theta||_{\reals} = 1\}.$$
For $\theta \in \mathbb{S}^{w-1}$, denote by $\Theta$ the one-dimensional subspace of $\V^\perp$ generated by $\theta$, and by $\Theta^\perp$ its Euclidean orthogonal complement in $\V^\perp$. We denote the Euclidean orthogonal projection map by $\pi_{\Theta^\perp} \colon \V^\perp \to \Theta^\perp$, so that for $\hat{a} \in \Theta^\perp$,
$$\pi_{\Theta^\perp}\inv(\hat{a})=\{\hat{a}+\tau\theta: \tau \in \reals\} \subeq \V^\perp.$$
Note that if $\theta$ is parallel to the $t$-axis, then $\Theta^\perp = V^\perp \times \{0\}.$ In what follows we will consider only those $\theta$ that lie in a small neighborhood of the $t$-axis, meaning that $\Theta^\perp$ should be thought of as a small perturbation of $(V^\perp \times \{0\}) \subeq \V^\perp$.

We equip $\Theta^\perp$ with the restriction of the Euclidean metric on $\V^\perp$, so that for each $\hat{a} \in \Theta^\perp$ and $r>0$,
$$B_{\Theta^\perp}(\hat{a},r) = B_W(\hat{a},r) \cap \Theta^\perp.$$
We also associate the $(w-1)$-dimensional Hausdorff measure in the Euclidean metric to $\Theta^\perp$.

We use the projection and slicing machinery of \cite{Mattila} to conclude Proposition \ref{find big tubes} from the following lemma.

\begin{lemma}\label{tubes to slabs} Suppose that $m<\alpha < mp/(p-2).$ There is a subset $S \subeq \mathbb{S}^{w-1}$ with $\cH^{w-1}(S)>0$ such that if $\theta \in S$, then the following implication is true for $\cH^{w-1}$-almost every point $\hat{a} \in \Theta^\perp$. If $\mathcal{C} \subeq\pi_{\Theta^\perp}\inv(\hat{a}) \subeq \V^\perp$ has the property
\begin{itemize}
\item for every $k \in \nats$, there is a number $\ep(k) > 0$ such that for all $0<r<\ep(k)$ and $a \in \mathcal{C} \cap K$
$$
\int_{\nbhd_\Heis({a}*\V,r) \cap K'} g^p \ d\cH^{2n+2}_\Heis \geq k r^{(w+1)-p\left(1-\frac{m}{\alpha}\right)},$$
\end{itemize}
then
$\cH^{1-\frac{p}{2}\left(1-\frac{m}{\alpha}\right)}_{\reals}(\mathcal{C})=0$.
\end{lemma}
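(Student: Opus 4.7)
The plan is an analog of Mostow's three steps $(\til{\rm{i}})$--$(\til{\rm{iii}})$, keyed to a Heisenberg coset-distance estimate along lines close to the $t$-axis direction in $\V^\perp$. Identifying the unit sphere $\mathbb{S}^{w-1}$ with the unit vectors of $\V^\perp = V^\perp \times \reals_t$ and decomposing $\theta=(\theta_{V^\perp},\theta_t)$, I would take $S$ to be the open spherical cap $\{\theta\in\mathbb{S}^{w-1}:|\theta_{V^\perp}|<c_0\}$ for a small constant $c_0>0$ depending on $K'$ and the symplectic form $\omega$; then $\cH^{w-1}(S)>0$. The key geometric lemma, to be established by a direct Kor\'anyi-norm computation of $(v_1,0)\inv*(a\inv*a')*(v_2,0)$ using $\omega|_V = 0$, is that for $\theta\in S$, $\hat a\in\Theta^\perp$, and $a, a'\in\pi_{\Theta^\perp}\inv(\hat a)$ with $a'-a=\tau\theta$, the Heisenberg distance between the parts of the cosets $a*\V$ and $a'*\V$ meeting a fixed bounded neighborhood of $K'$ is at least $c_1|\tau|^{1/2}$. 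Consequently the tubes $\nbhd_\Heis(a*\V,r)\cap K'$ and $\nbhd_\Heis(a'*\V,r)\cap K'$ are disjoint whenever $|\tau_a - \tau_{a'}|\geq C_0 r^2$.

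Next, for $\theta\in S$ fixed, I would define a finite Radon measure $\Psi_\theta$ on $\Theta^\perp$ via the Carath\'eodory construction, in analogy with the measure $\Phi$ of Section~\ref{adapt Gehring}, so that
$$\Psi_\theta(B_{\Theta^\perp}(\hat a,r))=\int_{(\pi_{\Theta^\perp}\circ\pi_{\V^\perp})\inv(B_{\Theta^\perp}(\hat a,r))\cap K'} g^p\, d\cH^{2n+2}_\Heis.$$
Since $\Theta^\perp$ is isometric to Euclidean $\reals^{w-1}$, the differentiation theorem for Radon measures produces, for $\cH^{w-1}$-a.e.~$\hat a$, the finite upper-density bound $\limsup_{r\to 0} \Psi_\theta(B_{\Theta^\perp}(\hat a,r))/r^{w-1} < \infty$.

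Fix such a good $\hat a$ and suppose $\mathcal{C}$ satisfies the hypothesis. For each $k\in\nats$ and $r\in(0,\ep(k))$, take a maximal $C_0 r^2$-separated set $\mathcal{D}(r)\subeq\mathcal{C}\cap K$. The associated tubes are pairwise disjoint on $K'$ by the geometric lemma, and by Lipschitz continuity of $\pi_{\Theta^\perp}\circ\pi_{\V^\perp}$ on compact sets (Euclidean target) their union lies inside $(\pi_{\Theta^\perp}\circ\pi_{\V^\perp})\inv(B_{\Theta^\perp}(\hat a, Lr))$. Combining the tube-wise lower bound with the density estimate yields
$$|\mathcal{D}(r)| \cdot k r^{(w+1)-p(1-m/\alpha)} \leq \Psi_\theta(B_{\Theta^\perp}(\hat a, Lr)) \lesssim_{\hat a} r^{w-1},$$
so $|\mathcal{D}(r)|\lesssim_{\hat a} k\inv r^{-(2-p(1-m/\alpha))}$. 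Covering $\mathcal{C}\cap K$ by these $|\mathcal{D}(r)|$ intervals of length $2C_0 r^2$ on the line, at the critical exponent $t=1-(p/2)(1-m/\alpha)$ the power of $r$ vanishes and $\cH^t_{2C_0 r^2}(\mathcal{C}\cap K)\lesssim_{\hat a} k\inv$; sending $r\to 0$ and then $k\to\infty$ forces $\cH^t(\mathcal{C}\cap K) = 0$, and exhaustion of $\Heis^n$ by compact sets delivers $\cH^t(\mathcal{C}) = 0$. It is essential here that the hypothesis supplies \emph{every} $k$, since the naive packing only pins the dimension at $t$; the $k\to\infty$ limit upgrades this to a zero Hausdorff measure statement at the critical exponent.

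The main obstacle is the geometric lemma. Mostow's original argument applies cleanly only to the exact $t$-axis direction, where no $\omega$-correction competes with $\tau\theta_t$ in the $t$-component of $a\inv*a'$. For $\theta_{V^\perp}\neq 0$ one could in principle cancel this contribution by choosing $v_1,v_2\in V$ with $|v_1+v_2|\gtrsim 1/|\theta_{V^\perp}|$, reducing the coset distance to order $|\tau||\theta_{V^\perp}|$ and defeating the packing argument. The saving observation is that restricting the cosets to the bounded set $K'$ forces $|v_i|$ to stay bounded, which precludes this cancellation when $c_0$ is sufficiently small and preserves the $|\tau|^{1/2}$ lower bound for the full positive-measure cap $S$.
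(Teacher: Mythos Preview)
Your proposal is correct and follows essentially the same route as the paper: the set $S$ is the same spherical cap near the $t$-direction, the geometric disjointness lemma is the paper's Lemma~\ref{disjoint tubes} (with the same ``bounded $v_i$ on $K'$'' mechanism you describe), the measure $\Psi_\theta$ and its a.e.\ finite density on $\Theta^\perp$ are identical, and the packing of disjoint tubes over a maximal $\simeq r^2$-separated subset of $\mathcal{C}\cap K$ is the same. The only cosmetic difference is that you run the final step as a direct covering estimate (bounding $|\mathcal{D}(r)|\,(r^2)^t\lesssim k^{-1}$ and sending $k\to\infty$), whereas the paper argues by contradiction from $\cH^\gamma(\mathcal{C})>0$ via the packing lower bound $N_r\gtrsim r^{-2\gamma}$; these are dual formulations of the same count.
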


Assuming Lemma \ref{tubes to slabs} we complete the proof of Proposition \ref{find big tubes}.

\begin{proof}[Proof of Proposition \ref{find big tubes}]
Recall that
$$
\mathcal{C}_\alpha = \left\{a \in \V^\perp \cap K: \liminf_{r \to 0}  \frac{\int_{\nbhd_\Heis(a*\V,r) \cap K'} g^p \ d\cH^{2n+2}_\Heis}{r^{(w+1)-p(1-m/\alpha)}} =\infty\right\}.$$
As $\alpha < mp/(p-2)$,  it holds that
$$ w-\frac{p}{2}\left(1-\frac{m}{\alpha}\right) > w-1.$$
Fix any number $$\beta > w-\frac{p}{2}\left(1-\frac{m}{\alpha}\right),$$
and towards a contradiction assume that $\cH^\beta_{\reals}(\mathcal{C}_\alpha) >0$. One can check that $\mathcal{C}_\alpha$ is a Borel set, and so \cite[Theorem~8.13]{Mattila} implies that after passing to a subset, we may assume that $\cH^\beta_{\reals}(\mathcal{C}_\alpha)<\infty$ as well.

Let $k \in \nats$. For each $a \in \mathcal{C}_\alpha$, there exists a quantity $\ep(a,k)>0$ such that for each $0<r<\ep(a,k)$,
$$
\int_{\nbhd_\Heis(a*\V,r) \cap K'} g^p \ d\cH^{2n+2}_\Heis \geq kr^{(w+1)-p(1-m/\alpha)}$$
For $l \in \nats$, define
$$\mathcal{C}_{k,l} = \{a \in \mathcal{C}_\alpha: \ep(a,k) \geq 1/l\}.$$
As
$$
\mathcal{C}_{k,1} \subeq \mathcal{C}_{k,2} \subeq \hdots \subeq \bigcup_{l \in \nats} \mathcal{C}_{k,l} = \mathcal{C}_\alpha,
$$
we may choose natural numbers $l_k$ such that
$$\cH^\beta_{\reals}(\mathcal{C}_{k,l(k)}) > \left(1-2^{-(k+2)}\right) \cH^\beta_{\reals}(\mathcal{C}_\alpha).$$
It follows that the set
$$\mathcal{C}=\bigcap_{k \in \nats} \mathcal{C}_{k,l(k)}$$
also satisfies $0<\cH^\beta_{\reals}(\mathcal{C})<\infty.$
Moreover, for each index $k \in \nats$, radius $0<r \leq l(k)\inv$, and point $a \in \mathcal{C}$, it holds that
$$
\int_{\nbhd_\Heis(a*\V,r) \cap K'} g^p \ d\cH^{2n+2}_\Heis \geq  kr^{(w+1)-p(1-m/\alpha)}.
$$
Let $S$ be the positive measure subset of $\mathbb{S}^{w-1}$ guaranteed by Lemma \ref{tubes to slabs}.  If $\theta \in S$, then for $\cH^{w-1}$-almost every $\hat{a} \in \Theta^\perp$,
\begin{equation}\label{zero measure slice}
\cH^{1-\frac{p}{2}\left(1-\frac{m}{\alpha}\right)}_{\reals}(\pi_{\Theta^\perp}\inv(\hat{a}) \cap \mathcal{C})=0.
\end{equation}

However, because $\beta >w-1$, it follows from \cite[Theorem 8.9 and Corollary 9.8]{Mattila} that for $\cH^{w-1}$-almost every $\theta \in \mathbb{S}^{w-1}$ the Euclidean orthogonal projection $\pi_{\Theta^\perp}(\mathcal{C}) \subeq \Theta^\perp$ has positive (and finite) $\cH^{w-1}$-measure.  Hence, by \cite[Theorem 10.10]{Mattila}, for $\cH^{w-1}$-almost every $\theta \in \mathbb{S}^{w-1}$, there is a set $A \in\Theta^\perp$ of positive $\cH^{w-1}$-measure such that if $\hat{a} \in A$, then
\begin{equation}\label{key estimate}
\dim_{\reals} \pi_{\Theta^\perp}\inv(\hat{a}) \cap \mathcal{C} = \beta-(w-1)> 1-\frac{p}{2}\left(1-\frac{m}{\alpha}\right).
\end{equation}
In particular, we may choose $\theta \in S$ and $\hat{a} \in \Theta^\perp$ such that \eqref{key estimate} and \eqref{zero measure slice} hold, which is a contradiction.
\end{proof}

The proof of Lemma \ref{tubes to slabs} roughly corresponds to Step ($\til{ii}$) in Mostow's method, with the slight perturbation of $\V^\perp$ taken into account. Given $\theta \in \mathbb{S}^{w-1}$,  we define an appropriate measure on $\Theta^\perp$.  For $\hat{a} \in \Theta^\perp$ and $r>0$, we define a ``tilted slab" in $\Heis^n$ by
$$\Sl(\hat{a},r) = \pi_{\V^\perp} \inv \circ \pi_{\Theta^\perp}\inv (B_{\Theta^\perp}(\hat{a},r)).$$
We define a measure $\Psi$ on $\Theta^\perp$ by the following Carath\'{e}odory construction. For $\ep>0$ and $E \subeq \Theta^\perp$, set
$$\Psi_\ep(E)= \inf\left\{\sum_{i \in \nats}\int_{\Sl(\hat{a}_i,r)) \cap K}g^p \ d\cH^{2n+2}_{\Heis}: E \subeq \bigcup_{i \in \nats} B_{\Theta^\perp}(\hat{a}_i,r_i), \hat{a}_i \in\Theta^\perp,\ 0<r_i < \ep\right\}.$$
Then set
$$\Psi(E) = \lim_{\ep\to 0}\Psi_\ep(E).$$
Since $g^p \in \Lloc^1(\Heis^{n})$, the hypotheses of \cite[Theorem 4.2]{Mattila} apply, and so the set function $\Psi$ defines a Borel regular measure on $\Theta^\perp$.  It follows from the sub-additivity of the integral that given $\hat{a} \in \Theta^\perp$ and $r>0$,
$$\Psi(B_{\Theta^\perp}(\hat{a},r))=\int_{\Sl(\hat{a},r) \cap K}g^p \ d\cH^{2n+2}.$$
Hence $\Psi$ is a Radon measure on $\Theta^\perp$.

By applying the Radon-Nikodym theorem \cite[Theorem 2.12]{Mattila} to $\Psi$ and $\cH^{w-1}$, we see that for $\cH^{w-1}$-almost every point $\hat{a}\in \Theta^\perp$,
\begin{equation}\label{RN}
\lim_{r \to 0} \frac{\int_{\Sl(\hat{a},r)) \cap K}g^p \ d\cH^{2n+2}_\Heis}{r^{w-1}} < \infty.
\end{equation}

The following geometric lemma is the key point of the proof of Lemma~\ref{tubes to slabs}; it corresponds to Step ($\til{iii}$) in Mostow's method.

\begin{lemma}\label{disjoint tubes} There is a set $S \subeq \mathbb{S}^{w-1}$ of positive $\cH^{w-1}$-measure, depending only on $K$, with the following property. Given $\theta \in S$, $\hat{a} \in \Theta^\perp$, and
\begin{align*} a_1&=\hat{a}+t_1\theta \in \pi_{\Theta^\perp}\inv(\hat{a}) \subeq \V^\perp\\
					 a_2 &=\hat{a}+t_2\theta  \in \pi_{\Theta^\perp}\inv(\hat{a}) \subeq \V^\perp,\end{align*}
where $t_1,t_2 \in \reals$, if there is $r>0$ such that
\begin{equation}\label{tube overlap}
\nbhd_\Heis({a_1}*\V,r) \cap \nbhd_\Heis({a_2}*\V,r) \cap K \neq \emptyset,
\end{equation}
then $|t_1 - t_2| \leq 8r^2$.
\end{lemma}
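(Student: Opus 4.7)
The plan is to show that the constraint is purely algebraic: expanding $p_1^{-1}*p_2$ in coordinates produces a $t$-component that factors as $s \cdot \Lambda(\theta, \hat{z}, v_1, v_2)$ where $s = t_2 - t_1$. Bounding the Heisenberg distance between $p_1$ and $p_2$ forces $|s \cdot \Lambda| \leq 4r^2$, and choosing $\theta$ close to the $t$-axis direction forces $|\Lambda| \geq 1/2$, yielding the conclusion.

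\textbf{Step 1 (Setup).} By hypothesis, there exists $q \in K$ and points $p_i \in a_i * \V$ with $d_\Heis(p_i, q) < r$, so $d_\Heis(p_1, p_2) \leq 2r$ and each $p_i$ lies in $\nbhd_\Heis(K,r)$. Writing $p_i = a_i * (v_i, 0)$ with $v_i \in V$, and writing $\hat{a} = (\hat{z}, \hat{t}) \in V^\perp \times \reals$ and $\theta = (\theta_V, \theta_t) \in V^\perp \times \reals$, we have the Euclidean coordinates
\[
p_i = \bigl(\hat{z} + t_i\theta_V + v_i, \; \hat{t} + t_i\theta_t + 2\omega(\hat{z}+t_i\theta_V, v_i)\bigr).
\]

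\textbf{Step 2 (The key algebraic computation).} The main calculation is to show that the $t$-coordinate of $p_1^{-1}*p_2$ equals
\begin{equation}\label{key-identity-sketch}
s\bigl[\theta_t + 2\omega(\theta_V, \hat{z} + v_1 + v_2)\bigr], \quad s := t_2 - t_1.
\end{equation}
This follows from a direct expansion of the group law using three facts: (a) $\omega$ is skew-symmetric; (b) $V$ is isotropic, so $\omega(v_1, v_2) = 0$; (c) $\omega(\theta_V, \theta_V) = 0$. After several cancellations (e.g.\ the $\omega(\hat z, v_i)$ and $\omega(v_i, \hat z)$ terms cancel pairwise by skew-symmetry, and the $t_i\omega(\theta_V, v_j)$ cross-terms reorganize into multiples of $s = t_2 - t_1$), every term surviving the simplification contains a factor of $s$, giving exactly \eqref{key-identity-sketch}. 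I expect this calculation to be the main obstacle, purely due to bookkeeping.

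\textbf{Step 3 (Heisenberg distance bound).} From $\|(z,t)\|_\Heis^4 = |z|^4 + t^2$ we have $|t| \leq \|(z,t)\|_\Heis^2$, so the $t$-coordinate of $p_1^{-1}*p_2$ is bounded in absolute value by $d_\Heis(p_1,p_2)^2 \leq 4r^2$. Combining with \eqref{key-identity-sketch}:
\[
|s| \cdot \bigl|\theta_t + 2\omega(\theta_V, \hat{z} + v_1 + v_2)\bigr| \leq 4r^2.
\]

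\textbf{Step 4 (A priori bounds from membership near $K$).} It remains to bound $|\hat z + v_1 + v_2|$ by a constant $C_K$ depending only on $K$. Since $p_i \in \nbhd_\Heis(K,r)$ and the Heisenberg norm controls the Euclidean norm of the $z$-component, the $z$-coordinate $\hat z + t_i \theta_V + v_i$ of $p_i$ is bounded by $C_K + r$. Using the orthogonal decomposition $\reals^{2n} = V \oplus V^\perp$ and the fact that $\hat z + t_i\theta_V \in V^\perp$ while $v_i \in V$, Pythagoras gives $|v_i| \leq C_K + r$ and $|\hat z + t_i\theta_V| \leq C_K + r$. Since $\pi_{\V^\perp}$ is Lipschitz on compact sets in the Euclidean metric, $a_i = \pi_{\V^\perp}(p_i)$ is bounded, so $|\hat a|$ and $|t_i|$ are bounded (using $|a_i|^2 = |\hat a|^2 + t_i^2$), which in turn bounds $|\hat z|$. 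Assuming $r \leq 1$, we obtain $|\hat z + v_1 + v_2| \leq C_K$ for a constant depending only on $K$.

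\textbf{Step 5 (Choice of $S$).} Define
\[
S = \Bigl\{\theta = (\theta_V, \theta_t) \in \mathbb{S}^{w-1} : \theta_t \geq \tfrac{3}{4}, \; |\theta_V| \leq \tfrac{1}{8 C_K}\Bigr\},
\]
a spherical cap about the $t$-axis direction; since the $t$-axis direction $(0, 1) \in V^\perp \times \reals$ lies on $\mathbb{S}^{w-1}$, this set has positive $\cH^{w-1}$-measure. For $\theta \in S$ and arbitrary $v_1, v_2, \hat z$ as above,
\[
\bigl|\theta_t + 2\omega(\theta_V, \hat z + v_1 + v_2)\bigr| \geq \tfrac{3}{4} - 2 |\theta_V|\, |\hat z + v_1 + v_2| \geq \tfrac{3}{4} - \tfrac{1}{4} = \tfrac{1}{2},
\]
using the standard bound $|\omega(u,w)| \leq |u||w|$. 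Combining with Step 3 yields $|s| \leq 8r^2$, as desired.
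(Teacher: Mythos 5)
Your proposal is correct and takes essentially the same route as the paper's own proof: the same algebraic identity expressing the $t$-component of $(a_1*v_1)^{-1}*(a_2*v_2)$ as $(t_2-t_1)\bigl(\pi_t(\theta)+2\omega(\theta,\hat{a}+v_1+v_2)\bigr)$, the same Kor\'anyi-norm bound $|\tau|\le 4r^2$, and the same choice of $S$ as a small spherical cap about the $t$-direction forcing the bracket to be at least $\tfrac12$. Your Step 4 (with the explicit restriction $r\le 1$) merely spells out the a priori bounds that the paper asserts when it says $\hat{a}$, $v_1$, $v_2$ may be assumed to lie in a fixed compact set depending only on $K$.
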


\begin{proof}
Assuming that \eqref{tube overlap} holds, we may find elements $v_1$ and $v_2$ in $\V$ such that
\begin{equation}\label{norm bound}
||(a_1*v_1)\inv*(a_2*v_2)||_{\Heis} < 2r.
\end{equation}
We denote the Euclidean orthogonal projection of $\reals^{2n+1}$ onto $\reals^{2n}$ by $\pi_{\reals^{2n}}$, and the Euclidean orthogonal projection  of $\reals^{2n+1}$ onto the $t$-axis (i.e., the last coordinate of $\reals^{2n+1}$) by $\pi_t$. For ease of notation, we omit reference to $\pi_{\reals^{2n}}$ in the arguments of the symplectic form $\omega$, so that for points $a,a' \in \Heis^n$, we write
$$\omega(a,a'):=\omega(\pi_{\reals^{2n}}(a),\pi_{\reals^{2n}}(a')).$$

We note first that by the linearity of $\pi_{\reals^{2n}}$ and the fact that $\omega$ is bi-linear and anti-symmetric,
\begin{align*}  a_1\inv*a_2 &= \left((t_2-t_1)\pi_{\reals^{2n}}(\theta),(t_2-t_1)\pi_{t}(\theta)+2\omega(\hat{a} + t_2\theta,\hat{a} + t_1\theta) \right)\\
&=(t_2-t_1) \left(\pi_{\reals^{2n}}(\theta),\pi_{t}(\theta)+2\omega(\theta,\hat{a}) \right).\end{align*}
Define $\tau$ to be the $t$-component of $(a_1*v_1)\inv*(a_2*v_2)$. Using the above equation and  the fact that both $\omega$ and $\pi_t$ vanish on $\V$, we now compute that
$$\tau = (t_2-t_1)\left(\pi_t(\theta)+2\omega(\theta,\hat{a}+v_1+v_2)\right)$$

As $\hat{a}$, $v_1$, and $v_2$ may all be assumed to lie in a fixed compact set depending only on $K$, whenever $\theta$ is in a sufficiently small neighborhood $S \subeq \mathbb{S}^{w-1}$ of the unit vector in the $t$-direction,
$$|\pi_t(\theta)+2\omega(\theta,\hat{a}+v_1+v_2)| \geq |\pi_t(\theta)| -|2\omega(\theta,\hat{a}+v_1+v_2)| \geq \frac{1}{2}$$
and hence $2|\tau| \geq |t_2-t_1|.$
The definition of the Kor\'anyi norm on $\Heis^n$ and \eqref{norm bound} now yield
$$4r^2 \geq \frac{|t_2-t_1|}{2},$$
as desired. \end{proof}

We now provide the proof of Lemma \ref{tubes to slabs}, and so complete the proof of Theorem~\ref{main}.

\begin{proof}[Proof of Lemma \ref{tubes to slabs}]
Let $S \subeq \mathbb{S}^{w-1}$ be the set provided by Lemma \ref{disjoint tubes}, and let $\theta \in S$. Recall that $\cH^{w-1}$-almost every point $\hat{a} \in \Theta^\perp$ satisfies the Radon-Nikodym estimate \eqref{RN}. Let $\hat{a}$ be such a point, and suppose that $\mathcal{C} \subeq \pi_{\Theta^\perp}\inv(\hat{a})\subeq \V^\perp$ has the property that
for every $k \in \nats$, there is a number $\ep(k) > 0$ such that
\begin{equation}\label{tube assumption}
\int_{\nbhd_\Heis(a*\V,r) \cap K'} g^p \ d\cH^{2n+2}_\Heis \geq k r^{(w+1)-p\left(1-\frac{m}{\alpha}\right)}
\end{equation}
for all $0<r<\ep(k)$ and $a \in \mathcal{C}$. Working towards a contradiction, we assume that $\cH_\reals^\gamma(\mathcal{C})>0$ where
$$
\gamma = 1- \frac{p}{2}\left(1-\frac{m}{\alpha}\right).
$$

Since $\pi_{\Theta^\perp}\circ \pi_{\V^\perp} \colon \Heis^n \to \Theta^\perp$ is Lipschitz on compact sets (recall that we have equipped $\Theta^\perp$ with the Euclidean metric), there is a number $\kappa \geq 1$, depending only on $K$ such that
$$
\nbhd_\Heis(a*\V, r/\kappa) \cap K  \subeq \Sl(\hat{a},r)
$$
for all $a \in \pi_{\Theta^\perp}\inv(\hat{a})$ provided $r>0$ is sufficiently small.

Consider a maximal $8(r/\kappa)^2$-separated set $\{a_i\}_{i=1}^{N_r} \subeq \mathcal{C} \cap K$; as usual we use the Euclidean metric on $\mathcal{C}\subeq \V^\perp$. Lemma~\ref{disjoint tubes} implies that the corresponding family $\{\nbhd_\Heis(a_i*\V,r/\kappa)\cap K\}_{i=1}^{N_r}$ is disjoint. Hence, the above statements and \eqref{tube assumption} imply that for sufficiently small $r>0$,
\begin{align*}
N_{r}k\left(\frac{r}{\kappa}\right)^{(w+1)-p\left(1-\frac{m}{\alpha}\right)} &\leq \sum_{i=1}^{N_r} \int_{\nbhd(a_i*\V,r/\kappa)\cap K} g^p \ d\cH^{2n+2}_\Heis \\ & \leq   \int_{\Sl(\hat{a},r) \cap K} g^p \ d\cH^{2n+2}_\Heis \, .
\end{align*}
Moreover, since $\cH_{\reals}^\gamma(\mathcal{C})>0,$ it holds that
$$\liminf_{r \to 0} N_r \gtrsim r^{-2\gamma}.$$
Combining these estimates with the definition of $\gamma$ shows that
$$k= k \liminf_{r \to 0} r^{-2\gamma+ (w+1)-p\left(1-\frac{m}{\alpha}\right) -(w-1)}  \lesssim \liminf_{r \to 0}\frac{\int_{\Sl(\hat{a},r) \cap K} g^p \ d\cH^{2n+2}_\Heis}{r^{w-1}} .$$
Letting $k$ tend to infinity contradicts \eqref{RN}, and yields the desired result.
\end{proof}

This line of reasoning establishes \eqref{final goal} and consequently completes the proof of Theorem \ref{main}.

\section{A mapping that increases the dimension of many lines} \label{4 corners}
We now prove Theorem \ref{four corners intro}. The construction is similar in spirit to those given in \cite[Theorem~1.3]{DimDistMetric} and \cite[Section~4]{BMT}.

\begin{proof}[Proof of Theorem \ref{four corners intro}]  We consider the foliation of $\Heis$ by left translates of the horizontal subgroup $\V$ defined by the $x$-axis; the same construction works for any horizontal subgroup of $\Heis$. Again, we set $W=(\V^\perp,d_{\reals^2})$, and we define for $a \in \V^\perp$ and $s \in \reals$
$$
a(s) = a*(s,0,0)$$
Let $p>4$ and let $\alpha \in [1,\frac{p}{p-2}]$. By the Dimension  Comparison Theorem, it suffices to show that there is a compact set $E \subeq \V^\perp$ and a continuous mapping $f \colon \Heis \to \reals^2$ with an upper gradient in $\rm{L}^p(\Heis)$ such that
$$
\dim_{\reals} E = 2- p\left(1-\frac{1}{\alpha}\right),
$$
and $\dim f(a*\V) = \alpha$ for every $a \in E$.

Let $$\beta = 2- p\left(1-\frac{1}{\alpha}\right)$$ and choose $0<\sigma<1$ such that $$4\sigma^{\beta} =1.$$
We consider the iterated function system defined by the (Euclidean) similarities $f_i \colon W \to W$, $i=1,\hdots,4$, where
\begin{align*} f_1((0,y,t))&=(0,\sigma y,{\sigma}t),\\
					 f_2((0,y,t))&=(0,{\sigma}y,{\sigma}t)+(0,1-{\sigma},0),\\
					 f_3((0,y,t))&=(0,{\sigma}y,{\sigma}t)+(0,0,1-{\sigma}),\\
					 f_4((0,y,t))&=(0,{\sigma}y,{\sigma}t)+(0,1-{\sigma},1-{\sigma}).
\end{align*}
The unique compact invariant set $F^\alpha$ of this system is also known as a {\it four-corner set} or {\it Garnett set}. The set $F^\alpha$ can be expressed explicitly in the following way. Let $I=\{0\} \times [0,1] \times [0,1]$ be the (Euclidean) unit square in $W$.  For $m \in \nats$, let $\mathcal{S}_m$ denote the sequences $\omega = (\omega_1,\hdots,\omega_n)$ of length $m$ with entries in the set $\{1,\hdots,4\}$. We employ the convention that $\mathcal{S}_0$ contains the empty sequence. For $\omega \in \mathcal{S}_m$, define
$$f_\omega = f_{\omega_1} \circ \hdots \circ f_{\omega_m}.$$
Set $F^\alpha_\omega = f_\omega(I).$ Then
$$F^\alpha = \bigcap_{m\in \nats} \bigcup_{\omega \in \mathcal{S}_m} F^\alpha_\omega.$$
The iterated function system satisfies the open set condition, and so
$$0< \cH_{\reals}^{\beta}(F^\alpha)<\infty$$
by Hutchinson's Theorem \cite{Hutchinson}.

Consider a diffeomorphism $\phi \colon \reals^3 \to \reals^3$ with the property that $\pi_W = P_W \circ \phi$, where $P_W(x,y,t) = (0,y,t)$ is the standard Euclidean orthogonal projection onto $W$. Fix $n \in \nats$ and $\omega \in \mathcal{S}_n$. We define a ``column" $\mathcal{C}_\omega$ over $F^\alpha_\omega$ by
$$\mathcal{C}_\omega = \phi\inv(\{(x,y,t): (0,y,t) \in F^\alpha_\omega\ \text{and}\ x\in [0,1]\}).$$
Thus, if $a \in F^\alpha_\omega$, then the point $a(s)$ of the leaf $a*\V$ is in $ \mathcal{C}_\omega$ for any $s \in [0,1]$.

Let $X_\omega$ be a maximal $\sigma^m$-separated set (in the Heisenberg distance) in $\mathcal{C}_\omega$. By volume considerations,  we see that
$$\card X_\omega \lesssim \sigma^{-2m}.$$
Denote
$$\mathcal{Q}_m =\left\{B_\Heis(z,\sigma^m): z \in \bigcup_{\omega \in \mathcal{S}_m}X_\omega \right\}
$$
and
$$
\mathcal{Q} = \bigcup_{m \in \nats}{\mathcal{Q}_m}.
$$
There exists a quantity $C\geq 1$, depending only on $\alpha$, such that if $\omega$ and $\omega'$ are sequences in $\mathcal{S}_n$, then
$$d_\Heis(\mathcal{C}_{\omega},\mathcal{C}_{\omega'}) \geq d_{\reals}(\mathcal{C}_{\omega},\mathcal{C}_{\omega'}) \geq \frac{\sigma^m}{C}.$$
Hence, for some possibly larger quantity $C \geq 1$, also depending only on $\alpha$,
the collection $\{(1/C)B: B \in \mathcal{Q}_m\}$ is disjoint. Since $\Heis$ is Ahlfors regular, we conclude that
\begin{equation}\label{bounded overlap as}\sup_{z \in \Heis} \sum_{B \in \mathcal{Q}_m} \chi_{100B}(z) <\infty.\end{equation}
For each $B \in \mathcal{Q}$, we may find a Lipschitz function $\psi_B \colon \Heis \to [0,1]$ such that $\psi_B|_{\ovl{B}} =1$, the support of $\psi_B$ is contained in $2B$, and
$$\Lip \psi_B \lesssim (\diam B)\inv.$$
Let $\xi\colon \{B \in \mathcal{Q}\} \to \ovl{B}_{\reals^N}(0,1)$ be measurable. For each $m \in \nats$, define $f_{\xi,m} \colon X \to \reals^N$ by
$$f_{\xi,m}(x) = \sum_{B \in \mathcal{Q}_m} \sigma^{\frac{m}{\alpha}}\psi_B(x)\xi(B).$$
Finally, define $f_\xi \colon X \to \reals^N$ by
$$f(x) = \sum_{m \in \nats}(1+m)^{-2}f_{\xi,m}(x).$$
Then $f_\xi$ is continuous and bounded. We claim that $\Lip f_\xi$ is an upper gradient of $f_\xi$, and that $\Lip f_{\xi}\in\rm{L}^p(\Heis)$. It suffices to show, for each $n \in \nats$, that $\Lip f_{\xi,m}$ is an upper gradient of $f_{\xi,m}$, and that the norms $\{||\Lip f_{\xi,m}||_{\rm{L}^p}\}_{m \in \nats}$ are uniformly bounded.  The first statement follows from the fact that $f_{\xi,m}$ is locally Lipschitz \cite{LAMS}. For the second fact, we calculate, using the bounded overlap
condition \eqref{bounded overlap as}, that
\begin{align*}
||\Lip f_{\xi,m}||^p_{{\rm{L}}^p} & \lesssim \sum_{\omega \in \mathcal{S}_m} \sum_{B \in X_\omega} \int_{2B} (\diam B)^{-p}\sigma^{\frac{mp}{\alpha}} \ d\cH^4_\Heis  \\ & \lesssim  4^m \sigma^{-2m} \sigma^{-mp} \sigma^{\frac{mp}{\alpha}} \sigma^{4m}=(4\sigma^{\beta})^m = 1.
\end{align*}

We now consider the vectors $\xi_B$ to be chosen randomly, i.e., we assume that the functions $\{\xi_B\}_{B \in \mathcal{Q}}$ are independent random variables distributed according to the uniform probability distribution on the closed unit ball $\ovl{B}_{\reals^N}(0,1)$. This makes the function $\xi$ into a random variable on the same probability space.

We will prove that for every $\alpha' < \alpha$,
$$\E_\xi \left( \int_{F_\alpha} I_{\alpha'} ( (f_\xi)_{\#}(\cH^{1}\restrict a*\V) \ d\cH^\beta(a) \right) < \infty,
$$
which implies that almost surely in $\xi$, it holds that $\dim f_\xi(a*V) \ge \alpha'$ for $\cH^\beta$-almost every $a \in F_\alpha$.  Thus, almost surely in $\xi$, the full-measure set of points $a \in F_\alpha$ where this occurs satisfies the requirements on the set $E$ in the statement of the theorem.

By the Fubini--Tonelli theorem, the definition of the energy functional, and \cite[Theorem~1.19]{Mattila}, it suffices to show that
$$
\int_{[0,1]} \int_{[0,1]}\int_{F_\alpha} \E_\xi \left( |f_\xi(a(s)) - f_\xi(a(s'))|^{-\alpha'} \right) \, d\cH^\beta(a)\, d\cH^1(s) \, d\cH^1(s') < \infty.
$$
For $a \in F$ and $s, s' \in [0,1]$, we write
$$
f_\xi(a(s)) - f_\xi(a(s')) = \sum_{B \in \mathcal{Q}} c_B(a,s,s') \, \xi_B,
$$
where for $B \in \mathcal{Q}_m$
$$
c_B(a,s,s') = (1+m)^{-2} \, \sigma^{\frac{m}{\alpha}}( \psi_B(a(s)) - \psi_B(a(s')) )
$$
We denote by $c(a,s,s') $ the supremum of the set of numbers $\{c_B(a,s,s')\}_{B \in \mathcal{Q}}.$
Note that $c(a,s,s')= |c_{B}(a,s,s')|$ for some $B \in \mathcal{Q}$, as
$$\sum_{B \in \mathcal{Q}}|c_B(s,s')| < \infty.$$

By \cite[Lemma 4.4]{BMT}, since $\alpha' < \alpha < N$ it holds that
$$\E_\xi \left( |f_\xi(a(s)) - f_\xi(a(s'))|^{-\alpha'} \right) \lesssim c(a,s,s')^{-\alpha'} \, .$$
In view of this, it remains to show that
$$
\int_{[0,	1]}\int_{[0,1]}\int_{F} c(a,s,s')^{-\alpha'} \, d\cH^\beta(a) d\cH^1(s) \, d\cH^1(s') < \infty.
$$
We will in fact show the stronger statement
$$
\sup_{a \in F}\sup_{s \in [0,1]} \int_{[0,1]} c(a,s,s')^{-\alpha'} \, d\cH^1(s') <\infty.
$$
Fix $a \in F$ and $s \in [0,1]$.  For each $s' \in [0,1]$, define $m(s') \in \nats$ by
$$2^{-m(s')+2} \leq d_{\Heis}(a(s),a(s')) < 2^{-m(s')+3}.$$
Find $B \in \mathcal{Q}_{m(s')}$ that contains $a(s)$. Then $a(s') \in 100B \bslash 2B$, and so
$$
c(a,s,s')||\ge |c_{B}(a,s,s')| = (1+n(s'))^{-2} \sigma^{\frac{n(s')}{\alpha}}.
$$
For each $m \in \nats$, denote by $E_m$ the set of points $s' \in [0,1]$ for which $m(s')=m$, and let $B_m \in \mathcal{Q}_{m}$ contain $a(s)$. By the above argument, $\cH^1(E_m) \leq \cH^1(100B_m \cap a(s)) \lesssim \sigma^m$. Hence
\begin{align*}
\int_{[0,1]}c(a,s,s')^{-\alpha'} \, d\cH^1(s) &= \sum_{m \in \nats} \int_{E_m} c(a,s,s')^{-\alpha'} \, d\cH^1(s) \\
& \le \sum_{m \in \nats} m^{2\alpha'} \sigma^{\frac{-m\alpha'}{\alpha}} \cH^1(E_m)\\
& \lesssim \sum_{m \in \nats} m^{2\alpha'}  \sigma^{m\left(1-\tfrac{\alpha'}{\alpha}\right)}
\end{align*}
Since $\alpha'<\alpha$, the final sum above converges to a value independent of $a \in F$ and $s \in [0,1]$. This completes the proof.
\end{proof}

\begin{remark}
It seems likely a mapping as in Theorem \ref{four corners intro} can be found for \emph{many} sets $E \subeq W$ of dimension $2- p\left(1-\frac{1}{\alpha}\right)$; the key property is that the set $E$ should be \emph{evenly coverable}, i.e., there exist constants $\sigma, C \geq 1$ such that for all sufficiently small $\ep>0$, there is a cover $\{B(x_k,r_k)\}_{k \in \nats}$ of $E$ by balls centered in $E$ such that
\begin{itemize}
\item[i)]$\sup_{k \in \nats} r_k < \ep$,
\item[ii)] $\sum_{k \in \nats} r_k^{\dim E} <C$
\item[iii)] $\sup_{x \in X} \sum_{k \in \nats} \chi_{B(x_k,\sigma r_k)}(x) < C.$
\end{itemize}
For further discussion of the notion of even coverability, see \cite[Section 7]{DimDistMetric}.
\end{remark}

\section{Questions}\label{questions section}

We conclude this paper with several questions motivated by the results.

\begin{question}\label{recover}
Can the conclusion of Theorem \ref{main} be modified to read
\begin{equation}\label{measure 0 question}  \cH_{\reals}^\beta\left(\{a \in \V^\perp:  \dim f(a*\V) > \alpha\}\right)=0\end{equation}
for $\alpha>m$?

The techniques of this paper do not seem to be sufficient to prove \eqref{measure 0 question}, although it seems likely that, after minor modifications, one could prove that for each compact neighborhood $K$ of the origin in $\Heis^n$,
$$ \cH_{\reals}^\beta\left(\{a \in \V^\perp:  \cH^\alpha (f(a*\V) \cap K) = \infty\}\right)=0.$$
\end{question}

\begin{question}\label{example}
The example in Theorem \ref{four corners intro} is only obtained for small $\alpha$, namely, $\alpha<\tfrac{p}{p-2}$. Denote by $\V_x$ the $x$-axis in $\Heis$, and let $p>4$. Does there exist a decreasing bijection $\beta \colon \left(1,\frac{p}{p-3}\right) \to (0,2)$, such that for each $\alpha \in \left(1,\frac{p}{p-3}\right)$, there is a compact set $E \subeq \V_x^\perp$ and a continuous mapping $f \in \W^{1,p}(\Heis;\reals^4)$ satisfying $0<\cH_{\reals^3}^{\beta(\alpha)}(E) < \infty$ and $\dim f(a*\V) \geq \alpha$ for every $a \in E$?
\end{question}

\begin{question}\label{ideal}
As mentioned in the introduction, our results combined with the Dimension Comparison Theorem give an estimate on the Heisenberg dimension of the set of left cosets whose dimensions are badly distorted by a Sobolev mapping.  However, they are likely not best possible. Does an upper bound of the form
$$\dim_{\Heis} \{a \in \V^\perp: \dim f(a*\V) \geq \alpha\} \leq (2n+2-m)-p\left(1-\frac{m}{\alpha}\right)$$
hold? A positive answer would indicate that, with respect to this problem, the Heisenberg group has the same behavior as $\reals^{2n+2}$.
\end{question}

\bibliographystyle{acm}
\bibliography{HDimDist}
\end{document}